\newcommand{\er}{\mathbb{R}}
\newcommand{\cee}{\mathbb{C}}
\newcommand{\enn}{\mathbb{N}}
\newcommand{\lam}{\lambda}
\newcommand{\res}{\mathrm{Res}}
\renewcommand{\sp}{\mathrm{sp}\ }
\newtheorem{theorem}{Theorem}[section]
\newtheorem{lemma}[theorem]{Lemma}
\newtheorem{proposition}[theorem]{Proposition}
\theoremstyle{definition}
\newtheorem{definition}[theorem]{Definition}
\newtheorem{example}[theorem]{Example}
\newtheorem{remark}[theorem]{Remark}
\numberwithin{equation}{section}
\title{An equilibrium problem for the limiting eigenvalue distribution of rational Toeplitz matrices}
\author{Steven Delvaux\footnotemark[1],\quad Maurice Duits\footnotemark[2]}
\date{}
\begin{document}

\maketitle
\renewcommand{\thefootnote}{\fnsymbol{footnote}}
\footnotetext[1]{Department of Mathematics, Katholieke Universiteit Leuven,
Celestijnenlaan 200B, B-3001 Leuven, Belgium. email:
steven.delvaux\symbol{'100}wis.kuleuven.be. \\
The first author is a Postdoctoral Fellow of the Fund for Scientific Research -
Flanders (Belgium).} \footnotetext[2]{Department of Mathematics, California
Institute of Technology, 1200 E. California Blvd., CA 91125, USA. email:
mduits\symbol{'100}caltech.edu.}

\begin{abstract}
We consider the asymptotic behavior of the eigenvalues of Toeplitz matrices with rational symbol
as the size of the matrix goes to infinity. Our main result is that the weak limit of the normalized eigenvalue
counting measure is a particular component of the unique solution to a vector equilibrium
problem. Moreover, we show that the other components describe the limiting behavior of certain generalized
eigenvalues. In this way, we generalize the recent results of Duits and Kuijlaars \cite{DK} for
banded Toeplitz matrices.


\textbf{Keywords}: Toeplitz matrix, rational function, generalized eigenvalues,
(vector) potential theory.

\end{abstract}

\section{Introduction}

For an integrable function $f$ on the complex unit circle $\{z\mid |z|=1\}$ the
\emph{Toeplitz matrix} $T_n(f)$ of size $n\times n$ is defined by
\begin{equation}\label{rationalToeplitzmx} T_n(f) =
\begin{pmatrix} f_{i-j}
\end{pmatrix}_{i,j=1}^{n} = \left(\begin{array}{lllll}
f_{0} & f_{-1} & f_{-2} & f_{-3} &  \ldots \\
f_{1} & f_{0} & f_{-1} & f_{-2} &  \ldots \\
f_{2} & f_{1} & f_{0} & f_{-1} &   \ldots \\
f_{3} & f_{2} & f_{1} & f_{0} &  \ldots \\
\vdots & \vdots & \vdots & \vdots  & \ddots
\end{array}\right)_{n\times n},
\end{equation}
where $f_k$ is the $k$th Fourier coefficient of $f$
\begin{align}\label{Fouriercoeff}
f_k=\frac{1}{2\pi} \int_0^{ 2\pi} f({\rm e}^{{\rm i}t}){\rm e}^{-{\rm i}k t} d
t.
\end{align}
The function $f$ is called the \emph{symbol} of $T_n(f)$. In this paper we will
be interested in symbols $f$ that are rational. That is, we assume that there
exist polynomials $A$, $B_1$ and $B_2$ such that
\begin{equation}\label{algebraiceq:0} f(z) = \frac{A(z)}{B_1(z)B_2(z)},
\end{equation}
where the  roots of $B_1$  (or $B_2$) lie inside (or outside) the unit circle.
Thus we do not allow $f$ to have poles on the unit circle. We take $A$ so that
it has no common roots with $B_1$ and $B_2$.

Note that if $B_1(z) = z^q$, $q\in\enn$, and $B_2(z)\equiv 1$, then
\eqref{algebraiceq:0} reduces to the Laurent polynomial
\begin{equation}\label{algebraiceq:banded} f(z) = \frac{A(z)}{z^q} = \sum_{k=-q}^p
f_k z^{k} \end{equation} where $$ p= \deg A - q. $$ Thus we have $f_k=0$ for
all $k>p$ and for all $k<-q$. The matrix $T_n(f)$ is then a \emph{banded
Toeplitz matrix}. The integers $p$ and $q$ in \eqref{algebraiceq:banded}
correspond to the outermost non-zero diagonals in the lower and upper
triangular part of this matrix, respectively. For a detailed discussion of
banded Toeplitz matrices see \cite{BG}.

We are interested in the asymptotic behavior of the eigenvalues of $T_n(f)$ as
$n\to \infty$. It is known that the eigenvalues accumulate on a particular
curve in the complex plane that we will  introduce shortly. Moreover, there
exists a measure on this curve describing the limiting distribution of the
eigenvalues. It was shown in \cite{DK} that for banded Toeplitz matrices this
limiting distribution is subject to an equilibrium problem that is naturally
constructed out of the symbol. The purpose of the present paper is to extend
this result to the case of rational symbols.\smallskip


Let us first review  some results on the asymptotic behavior of eigenvalues of
 rationally generated Toeplitz matrices.
Let $f$ be as  in \eqref{algebraiceq:0}  and $T_n(f)$ the associated Toeplitz matrix. Denote the
spectrum of $T_n(f)$ as
$$ \sp T_n(f) = \{\lam\in\cee\mid \det(T_n(f)-\lam I)=0\}.
$$
To describe the asymptotic behavior of the spectrum we introduce, as in
\cite{SchmidtSpitzer}, two different limiting sets
$$ \liminf_{n\to\infty} \sp T_n(f)
$$
consisting of all $\lam\in\cee$ for which there exists a sequence
$\{\lam_n\}_{n\in\enn}$, with $\lam_n\in\sp T_n(f)$ converging to $\lam$, and
the set
$$ \limsup_{n\to\infty} \sp T_n(f)
$$
consisting of all $\lam$ for which there exists a sequence
$\{\lam_n\}_{n\in\enn}$, with $\lam_n\in\sp T_n(f)$ having a subsequence
converging to $\lam$.

It turns out that these limiting sets can be described in terms of solutions to the equation
\begin{equation}\label{algebraiceq:1}
0=f(z)-\lam 
= \frac{A_{\lam}(z)}{B_1(z)B_2(z)}
\end{equation}
where \begin{equation}\label{def:flambda} A_{\lam}(z) := A(z)-\lam
B_1(z)B_2(z).
\end{equation} Following the analogy with \eqref{algebraiceq:banded},
we define
\begin{equation}\label{def:pq} q:=\deg B_1,\qquad p:=\max(\deg
A,\deg B_1\deg B_2)-q.\end{equation} To avoid trivial cases, in what follows we
always assume that $p,q\geq 1$, see e.g.\ \cite{Day2}. We also assume without
loss of generality that
\begin{equation}\label{gcd:condition}
\textrm{gcd}\{k\mid f_k\neq 0\} = 1,
\end{equation}
see \cite[Page 263]{BG}.

Note that $A_{\lam}(z)$ in \eqref{def:flambda} is a polynomial of degree $p+q$
in $z$, with each of its coefficients depending linearly on $\lam$. There can
be at most one value of $\lam\in\cee$ for which the leading coefficient
vanishes. For all other $\lam\in\cee$, the polynomial $A_{\lam}(z)$ has
precisely $p+q$ roots $z=z(\lam)$ (counting multiplicities) and we label them
by absolute value as
\begin{equation}\label{ordering:roots} 0\leq |z_1(\lam)|\leq |z_2(\lam)|\leq\ldots\leq
|z_{p+q}(\lam)|.
\end{equation}
In case where two or more subsequent roots in \eqref{ordering:roots} have the
same absolute value, we may arbitrarily label them so that
\eqref{ordering:roots} is satisfied. For the special value of $\lambda$ for
which the polynomial $A_\lambda$ has  less than $p+q$  roots, say $p+q-k$,  we
again order these roots  $z_1(\lambda),\ldots,z_{p+q-k}(\lambda)$ as in
\eqref{ordering:roots} and then we set
$z_{p+q-k+1}(\lambda)=\ldots=z_{p+q}(\lambda)=\infty$.

Define the curve
\begin{equation}\label{def:curves0} \Gamma_0 := \{\lam\in\cee \mid |z_{q}(\lam)| = |z_{q+1}(\lam)|
\}.
\end{equation}
The fact of the matter is
that
\begin{equation} \liminf_{n\to\infty} \sp T_n(f) = \limsup_{n\to\infty} \sp
T_n(f) = \Gamma_0.
\end{equation}
This result was first established by P. Schmidt and F. Spitzer
\cite{SchmidtSpitzer} in the banded Toeplitz case \eqref{algebraiceq:banded},
using a determinant identity by H. Widom \cite{Widom0}. The generalization to
the case of rational symbols \eqref{algebraiceq:0} is due to K.M. Day
\cite{Day2}, based on an extension \cite{Day1} of Widom's determinant
identity.\smallskip

Let $\nu_n$ be the counting measure
on the eigenvalues of $T_n(f)$
\begin{align} \label{eq:nun}
\nu_n=\frac{1}{n}\sum_{\lam \in \sp T_n(f)}\delta_\lambda
\end{align}
where $\delta_\lam$ is the Dirac measure at $\lambda$ and each eigenvalue is counted according to its multiplicity. It turns out
that the measures $\nu_n$ converge weakly to a measure  $\mu_0$ on $\Gamma_0$.

In the banded case \eqref{algebraiceq:banded} the measure $\mu_0$ is known to
be absolutely continuous, and an explicit expression for this measure was given
by I.I. Hirschman \cite{Hirschman}. An alternative representation of $\mu_0$
can be obtained by setting $k=0$ in \eqref{def:measuresk} below, cf.\
\cite{DK}. Further results about $\mu_0$ in the banded case can be found in
\cite{BG,DK,Hirschman,Ullman}.

For Toeplitz matrices with rational symbol \eqref{algebraiceq:0} the limiting
eigenvalue measure does not need to be absolutely continuous. Indeed, it was
shown by Day \cite{Day2} that this measure has an absolutely continuous part
together with at most two point masses.\smallskip


Finally, we turn to the results of \cite{DK}. Consider the general system of
curves
\begin{equation}\label{def:curves} \Gamma_k = \{\lam\in\cee \mid |z_{q+k}(\lam)| = |z_{q+k+1}(\lam)| \},
\end{equation}
for $k=-q+1,\ldots,p-1$. Each curve $\Gamma_k$ consists of finitely many
analytic arcs. We equip every analytic arc of $\Gamma_k$ with an orientation
and we define the $+$-side (or $-$-side) as the side on the left (or right) of
the arc when traversing the arc according to its orientation.

For $k=-q+1,\ldots,p-1$ we define the measure
\begin{equation}\label{def:measuresk}
d\mu_k(\lam) = \frac{1}{2\pi i}\sum_{j=1}^{q+k}\left(
\frac{z_{j+}'(\lam)}{z_{j+}(\lam)}-\frac{z_{j-}'(\lam)}{z_{j-}(\lam)}
\right)d\lam
\end{equation}
on the curve $\Gamma_k$. Here $d\lam$ denotes the complex line element on each
analytic arc of $\Gamma_k$, according to the chosen orientation of $\Gamma_k$.
Moreover, $z_{j+}(\lam)$ and $z_{j_-}(\lam)$ are the boundary values of
$z_j(\lambda)$ obtained from the $+$-side and $-$-side respectively of
$\Gamma_k$. These boundary values exist except for a finite number of points.
Note that \eqref{def:measuresk} is actually independent of the choice of the
orientation.

For the banded case it is shown in \cite{DK} that each $\mu_k$ is a finite
positive measure. Moreover, $\mu_0$ is the measure of Hirschman, that is,  the
limit of the normalized eigenvalue counting measures $\nu_n$ as given in
\eqref{eq:nun}.
The main observation in \cite{DK} is that the system of measures  $\{\mu_k\}_k$ together
uniquely minimizes an energy functional defined on the system of
curves $\{\Gamma_k\}_k$.

The purpose of this paper is to prove that also for  rational
symbols the measures $\{\mu_k\}_k$ minimize an energy functional, thus generalizing the results in \cite{DK}.
 The general
definition of the energy functional involves point sources that do not occur in
the banded Toeplitz case. This is related to the phenomenon that the limiting
eigenvalue distribution possibly has point masses for rationally generated
Toeplitz matrices, as mentioned before. We also emphasize that the $\mu_k$ are
absolutely continuous. It is to be understood that $\mu_0$ is the
\emph{absolutely continuous part} of the limiting eigenvalue distribution, with
the possible point masses removed. Our results will be stated in detail in the
next section.

\section{Statement of results}

\subsection{Auxiliary definitions}
\label{subsection:auxdef}

First we introduce some definitions that will be used in the statement of our
main theorems. For $k=0$ these definitions will be essentially the ones of Day
\cite{Day2}, but we will state the definitions for general values of
$k\in\{-q+1,\ldots,p-1\}$.

\begin{definition}\label{def:lambda12} Recall the notations \eqref{def:flambda}--\eqref{def:pq}.
Define the coefficients $a_k,b_k\in\cee$, $k=-q,\ldots,p$, by
\begin{align}\label{def:akbk}
A_\lambda(z)=A(z)-\lam B_1(z)B_2(z)=:\sum_{k=-q}^p
(a_k-\lambda b_k) z^{q+k}.
\end{align}
(Note the index shift.) Define $\lam_1,\lam_2\in\overline{\cee}:=\cee\cup\{\infty\}$ such that
$$ a_{-q}-b_{-q}\lam_1 = 0,\qquad\textrm{and } a_p-b_p\lam_2 = 0, $$
respectively. Define $k_1,k_2\in\{1,\ldots,p+q\}$ to be the maximal indices for
which
$$ a_{k}-b_{k}\lam_1 = 0,\qquad k=-q,\ldots,-q+k_1-1,
$$
and
$$ a_{k}-b_{k}\lam_2 = 0,\qquad k=p-k_2+1,\ldots,p,
$$
respectively. Here we make the convention that $a_k-b_k\lam_j = 0$ if $b_k=0$
and $\lam_j=\infty$, $j=1,2$.
\end{definition}

The numbers $\lam_1,\lam_2$ in Definition \ref{def:lambda12} are the unique
$\lam$-values for which the polynomial $A_\lam$  has some of its roots equal to $0$ (for
$\lambda_1$)  or to $\infty$ (for $\lambda_2$). In fact, the numbers $k_1$ and
$k_2$ are chosen such that $A_{\lam_1}$ has $k_1$ roots equal to zero, and
$A_{\lam_2}$ has $k_2$ roots at $\infty$. For all other values of $\lam$,
$A_{\lam}$ has precisely $p+q$ roots (counting multiplicities) which are all
non-zero and finite.

\begin{remark} \label{rem1}
By definition we have that $k_1,k_2\leq p+q$. The case where $k_1>p+q$ or
$k_2>p+q$ cannot occur since it would imply that all coefficients of
$A_{\lam}(z)$ are equal up to multiplication with a scalar. This would then imply that the
numerator and denominator in \eqref{algebraiceq:0} are equal up to a
scalar factor, contrary to our assumptions. Note also that it is possible to
have either $k_1=p+q$ or $k_2=p+q$, but not simultaneously. Indeed, in the
latter case we would have that $A_{\lambda}(z)=(a_{-q}-\lam b_{-q})+(a_p-\lam
b_p)z^{p+q}$ and hence $B_1(z)B_2(z)=b_{-q}+b_p z^{p+q}$. The latter implies
that either $p=0$ or $q=0$ which contradicts the assumption $p,q\geq 1$ made in
the introduction.
\end{remark}

\begin{definition}\label{def:mk}
For each $k=-q+1,\ldots,p-1$ define
\begin{equation}\label{index:m1} m_{1,k} =
\max\left(1-\frac{q+k}{k_1},0\right)\in [0,1),
\end{equation}
\begin{equation}\label{index:m2}
m_{2,k} = \max\left(1-\frac{p-k}{k_2},0\right)\in [0,1),
\end{equation}
and
\begin{equation}\label{index:m} m_k = 1-m_{1,k}-m_{2,k}.
\end{equation}
\end{definition}

The numbers $m_{1,k}$ and $m_{2,k}$ will be the weights
of certain point masses, see further. The quantity $m_k$ will be the total mass of the measure $\mu_k$ in
\eqref{def:measuresk}. Occasionally we will also consider
$m_{1,k}$, $m_{2,k}$ and $m_{k}$ for the indices $k=-q$ or $k=p$.

Note that the $m_k$ are strictly positive for all $k$. Indeed, from the
definition of $m_{1,k},m_{2,k},m_{k}$ and the fact that $k_1\leq p+q$ and
$k_2\leq p+q$  it is easy to check that $m_k\geq 0$ for $k=-q+1,\ldots,p-1$.
Moreover, if  $m_k=0$ for some $k$ then $k_1=k_2=p+q$. However, in
Remark~\ref{rem1} we observed that this is not possible.



\begin{example}\label{example:bandedcase}
Consider the banded case, i.e.\ $b_0=1$ and $b_k=0$ for all other $k$. In that
case we have $\lam_1=\lam_2=\infty$, $k_1=q$ and $k_2=p$. Then the numbers
$m_{1,k}$, $m_{2,k}$ and $m_{k}$ in Definition \ref{def:mk} are given in the
following table
\begin{equation}\label{tabel:bandedcase}
\left.\begin{array}{c|ccccccc}
k & -q+1 & \ldots & -1 & 0 & 1& \ldots & p-1 \\
\hline m_{1,k} & \frac{q-1}{q} & \ldots & \frac 1q & 0 & 0 & \ldots & 0 \\
m_{2,k} & 0 & \ldots & 0 & 0 & \frac 1p & \ldots & \frac{p-1}{p}\\
m_k & \frac 1q & \ldots   & \frac{q-1}{q} & 1 & \frac{p-1}{p} & \ldots & \frac
1p
\end{array}.\right.
\end{equation}
The last row of \eqref{tabel:bandedcase}  contains  the masses of the measures $\mu_k$ appearing in
\cite{DK}.
\end{example}


\begin{example} Here are two examples of possible behavior when $q=4$ and
$p=2$:
\begin{equation}\label{tabel:examplebehavior}
\left.\begin{array}{c|cccccc}
k & -3 & -2 &-1&0&1\\
\hline m_{1,k} & 2/3 & 1/3 & 0 & 0  & 0\\
m_{2,k} & 0   & 0   & 0   &  0 & 0\\
m_k & 1/3 & 2/3 & 1 & 1 & 1
\end{array},\right.\qquad
\left.\begin{array}{c|ccccccc}
k & -3 & -2 &-1&0&1\\
\hline
m_{1,k} & 4/5 & 3/5 & 2/5 & 1/5 & 0 \\
m_{2,k} & 0 & 0& 0& 1/3 & 2/3\\
m_k & 1/5 & 2/5 & 3/5 & 7/15 & 1/3
\end{array},\right.
\end{equation}
for the case where $(k_1,k_2)=(3,1)$ or $(5,3)$, respectively. This occurs
e.g.\ for the rational symbols
\[   \left\{\begin{array}{ll}
f(z) = \frac{1+z^2+2z^6}{1+z^2+z^3+z^5+z^6}& \textrm{(left table),}\\
f(z) =\frac{1+z^2+z^3+2z^5+2z^6}{1+z^2+z^3+z^5+z^6}& \textrm{(right
table).}\end{array}\right.\]
\end{example}

\subsection{The equilibrium problem}
\label{subsection:equil}

Below we will consider measures $\mu$ supported on contours in $\cee$. If the support is unbounded then we will assume that
$$ \int \log(1+|x|)\ d\mu(x)<\infty.
$$
For such a measure $\mu$ define its \emph{logarithmic energy} as
\begin{equation}\label{deflogarithmicenergy}
I(\mu) = \int_{}\int_{}\ \log\frac{1}{|x-y|} \ d\mu(x)\ d\mu(y).
\end{equation}
Similarly, for measures $\mu,\nu$ define their \emph{mutual energy} as
\begin{equation}\label{defmutualenergy}
I(\mu,\nu) = \int_{}\int_{}\ \log\frac{1}{|x-y|} \ d\mu(x)\ d\nu(y).
\end{equation}

\begin{definition} (Compare with \cite{DK})
We call a vector of measures $\vec{\nu} = (\nu_{-q+1},\ldots,\nu_{p-1})$
\emph{admissible} if $\nu_k$ has finite logarithmic energy, $\nu_k$ is
supported on $\Gamma_k$, and $\nu_k$ has total mass $\nu_k(\Gamma_k) = m_k$ for
every $k=-q+1,\ldots,p-1$, recall \eqref{index:m}.
\end{definition}

The \emph{energy functional} $J$ is defined by
\begin{multline}\label{energyfunctional} J(\vec{\nu}) = \sum_{k=-q+1}^{p-1}
I(\nu_k) - \sum_{k=-q+1}^{p-2} I(\nu_k,\nu_{k+1}) -
\frac{\chi_{\lam_1\neq\infty}\chi_{k_1<p+q}}{k_1}\int \log\frac{1}{|x-\lam_1|}\ d\nu_{-q+k_1}(x)\\
- \frac{\chi_{\lam_2\neq\infty}\chi_{k_2<p+q}}{k_2}\int
\log\frac{1}{|x-\lam_2|}\ d\nu_{p-k_2}(x).
\end{multline}
Here we define  $\chi_{\lam_1\neq\infty}$ to be $1$ when $\lam_1\neq\infty$ and
$0$ otherwise. The  quantities $\chi_{\lam_2\neq \infty}, \chi_{k_1<p+q}$ and
$\chi_{k_2<p+q}$ are similarly defined.

The \emph{equilibrium problem} is to minimize the energy functional
\eqref{energyfunctional} over all admissible vectors of positive measures
$\vec{\nu}$.

The equilibrium problem may be understood intuitively as follows. On each of
the curves $\Gamma_k$ one puts charged particles with total charge $m_k$.
Particles that lie on the same curve repel each other. The particles on two
consecutive curves attract each other, but with a strength that is only half as
strong as the repulsion on a single curve. Particles on different curves that
are non-consecutive do not interact with each other in a direct way.  In
addition, if $\lambda_1\neq\infty$ and $k_1<p+q$ then we have an external field
acting on the particles on the curve $\Gamma_{-q+k_1}$. Similarly, if
$\lambda_2\neq \infty$ and $k_2<p+q$ we have an external field acting on the
particles on $\Gamma_{p-k_2}$. The external fields come from point charges at
$\lam=\lam_1$ and $\lam=\lam_2$, respectively. The minus signs in
\eqref{energyfunctional} imply that these point charges are \emph{attractive}.
Such external fields are sometimes referred to as \lq sinks\rq. Note that there
are no external fields acting on the other measures $\nu_k$, $k\not\in
\{-q+k_1,p-k_2\}$.

Note that the external fields acting on the measures $\nu_{-q+k_1}$ and
$\nu_{p-k_2}$
do not occur in \cite{DK}. Indeed, in that case  we have
$\lam_1=\lam_2=\infty$ and hence  $\chi_{\lam_1\neq\infty}$ and $\chi_{\lam_2\neq\infty}$ in
\eqref{energyfunctional} vanish, see Example~\ref{example:bandedcase}.

For more information on equilibrium problems with external fields,
see~\cite{NS,Rans,SaffTotik}.

\begin{remark} 
In order for the above equilibrium problem to make sense, we need the energy
functional $J$ in \eqref{energyfunctional} to be bounded from below. A proof of
this boundedness will be given in Lemma~\ref{lemma:boundedbelow}. For the boundedness it is important to note that
\begin{equation}\label{builtin:security}\lam_1\not\in\Gamma_{-q+k_1} \textrm{  and  }
\lam_2\not\in\Gamma_{p-k_2},
\end{equation}
which follows immediately from the definitions of $k_1$ and $k_2$.  Hence the sinks  are not on the contours on which they are acting. 
\end{remark}

The following is our main theorem.

\begin{theorem}\label{theorem:main} Recall the notations in \eqref{def:pq}, \eqref{ordering:roots},
\eqref{def:curves} and \eqref{def:measuresk} and assume that $p,q\geq 1$. Then
\begin{itemize}
\item[(a)] The vector of measures $\vec{\mu} = (\mu_{k})_{k=-q+1}^{p-1}$ defined in \eqref{def:measuresk} is admissible.
\item[(b)] For each $k\in\{ -q+1,\ldots,p-1\}$ there exists a constant
$l_k\in\mathbb{R}$ such that \begin{multline} 2\int\log\frac{1}{|\lam-x|}\
d\mu_k(x) - \int\log\frac{1}{|\lam-x|}\
d\mu_{k+1}(x) - \int\log\frac{1}{|\lam-x|}\ d\mu_{k-1}(x) \\
-
\frac{\chi_{\lam_1\neq\infty}\chi_{k=-q+k_1}}{k_1}\log\frac{1}{|\lam-\lam_1|}
-
\frac{\chi_{\lam_2\neq\infty}\chi_{k=p-k_2}}{k_2}\log\frac{1}{|\lam-\lam_2|}=l_k,
\end{multline}
for $\lam\in\Gamma_k\setminus\{\lam_1,\lam_2\}$. Here we let $\mu_{-q}$ and
$\mu_{p}$ be the zero measures.
\item[(c)] $\vec{\mu} =(\mu_{k})_{k=-q+1}^{p-1}$ is the unique solution to the equilibrium
problem described above.
\end{itemize}
\end{theorem}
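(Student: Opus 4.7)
\smallskip
\noindent\emph{Plan.} The strategy adapts \cite{DK} to the rational case, the new feature being the finite sinks at $\lambda_1,\lambda_2$. The central object is the multivalued analytic function
\[
G_k(\lambda):=\sum_{j=1}^{q+k}\log z_j(\lambda),
\]
defined on $\cee\setminus\bigcup_\ell\Gamma_\ell$ after fixing branches. The telescoping identity
\[
2G_k-G_{k-1}-G_{k+1}=\log z_{q+k}-\log z_{q+k+1}
\]
has real part vanishing precisely on $\Gamma_k$, and it is the algebraic engine of the whole proof.

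For part (a), observe that $d\mu_k=\tfrac{1}{2\pi i}(G_{k+}'-G_{k-}')\,d\lambda$. Support on $\Gamma_k$ is immediate, because across $\Gamma_\ell$ with $\ell\ne k$ the interchange of roots is entirely among indices either $>q+k$ or $\leq q+k$, so $G_k$ extends analytically. The total mass $\mu_k(\Gamma_k)$ is evaluated by integration by parts combined with Puiseux expansions of $A_\lambda(z)$ at the critical values $\lambda_1,\lambda_2,\infty$: ramification of order $k_j$ contributes a boundary term of exactly $m_{j,k}$, giving $\mu_k(\Gamma_k)=1-m_{1,k}-m_{2,k}=m_k$. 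Finite logarithmic energy follows from the $|\lambda-\lambda^*|^{-1+1/\nu}$ local behavior of the density at the branch-point endpoints of $\Gamma_k$, which is $\log$-integrable.

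For part (b), use a Sokhotski--Plemelj-type representation
\[
U^{\mu_\ell}(\lambda)=-\Re G_\ell(\lambda)+c_\ell+(\text{sink contributions at }\lambda_1,\lambda_2),
\]
valid in each face of $\cee\setminus\Gamma_\ell$. Then the $G$-terms in the combination $2U^{\mu_k}-U^{\mu_{k-1}}-U^{\mu_{k+1}}$ collapse via the telescoping identity into $\Re(\log z_{q+k}-\log z_{q+k+1})$, which vanishes on $\Gamma_k$, while the sink contributions, weighted by the Puiseux exponents $1/k_j$, match exactly the explicit sink terms of the statement on the curves $\Gamma_{-q+k_1},\Gamma_{p-k_2}$. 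The $\chi$-indicators account for the degenerate cases $\lambda_j=\infty$ or $k_j=p+q$, where the would-be sink is absorbed into the normalization at infinity rather than appearing as a finite residue. The residual is a bounded harmonic function constant on each $\Gamma_k$, giving the constant $l_k$.

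For part (c), combine the boundedness below of Lemma~\ref{lemma:boundedbelow} with strict convexity. The quadratic part of $J$ is $\tfrac12\sum_{ij}C_{ij}\,I(\nu_i,\nu_j)$ with $C$ the tridiagonal Cartan matrix of type $A_{p+q-1}$, which is positive definite; since $I(\cdot,\cdot)$ is positive definite on finite-energy signed measures of zero total mass, $J$ is strictly convex on the admissible class and its minimizer is unique. Parts (a) and (b) exhibit $\vec\mu$ as an admissible vector satisfying the Euler--Lagrange equations, so it must be that unique minimizer. The main obstacle is part (b): tracking the branches of the $G_\ell$ simultaneously along the network $\bigcup_\ell\Gamma_\ell$ and verifying that the Puiseux residue contributions at $\lambda_1,\lambda_2,\infty$ produce the sink terms with the precise coefficients $1/k_1,1/k_2$ on exactly the curves $\Gamma_{-q+k_1},\Gamma_{p-k_2}$, through every combination of $\lambda_j\in\cee$ vs.\ $\lambda_j=\infty$ and $k_j<p+q$ vs.\ $k_j=p+q$.
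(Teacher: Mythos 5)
Your proposal follows essentially the same route as the paper: your $G_k$ is the paper's $\log w_k$, the telescoping identity $2G_k-G_{k-1}-G_{k+1}=\log(z_{q+k}/z_{q+k+1})$ together with the residue/Puiseux analysis at $\lam_1,\lam_2,\infty$ is exactly what Propositions~\ref{prop:threeproperties} and \ref{prop:cauchytransformcontour} supply, and your Cartan-matrix positive-definiteness argument for part (c) is equivalent to the paper's sum-of-squares decomposition \eqref{energy:alternativerepr} combined with Simeonov's inequality \eqref{Simeonov:estimate} (which is the precise statement of your ``$I$ positive definite on zero-mass signed measures'' when the $\Gamma_k$ are unbounded). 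You correctly identify part (b) as the point requiring care; the paper carries out that bookkeeping through the explicit potential formula \eqref{cauchytransform:muk} and the discrete Laplacian identities \eqref{summk:arithmetic1}--\eqref{summk:arithmetic2} for $m_{1,k}$, $m_{2,k}$.
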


Theorem \ref{theorem:main} will be proved in Section
\ref{section:proofmaintheorem}.  Note that the equalities in Part~(b) are
precisely the Euler-Lagrange variational conditions of the equilibrium problem.
Part~(c) will then be a consequence of the convexity of the energy functional
$J$ and the fact that $J$ is bounded from below.

\subsection{The measures $\mu_k$ as limiting distributions of generalized
eigenvalues} \label{subsection:geneig}

It was proved in \cite{DK} that in case of banded Toeplitz matrices, the
measures $\mu_{k}$ for $k\neq 0$ also have an interpretation of being the
limiting measures for certain generalized eigenvalues. For the rationally
generated Toeplitz matrices such a result remains valid.

\begin{definition} \label{def:countingmeasuresrational}
For $k = -q+1,\ldots,p-1$ and $n\geq 1$ we define the polynomial
${P}_{k,n}$ by
$$ {P}_{k,n}(\lam) = \det T_n(z^{-k}(f(z)-\lam))
$$
and we define the \emph{$k$th generalized spectrum} of $T_n(f)$ by
$$ \mathrm{sp}_k T_n(f) = \{\lam\in\cee \mid {P}_{k,n}(\lam)=0\}.
$$
Finally, we define $\mu_{k,n}$ as the normalized zero counting measure of
$\mathrm{sp}_k T_n(f)$
$$  {\mu}_{k,n} = \frac 1n \sum_{\lam\in\mathrm{sp}_k T_n(f)} \delta_{\lam},
$$
where in the sum each $\lam$ is counted according to its multiplicity as a zero
of ${P}_{k,n}$.
\end{definition}

The Toeplitz matrix $T_n(z^{-k}(f-\lam))$ in
Definition~\ref{def:countingmeasuresrational} may be interpreted as a shifted
version of $T_n(f-\lam)=T_n(f)-\lam I$. For example if $k=2$ then by
\eqref{Fouriercoeff} we have
\begin{equation}\label{rationalToeplitzmxshifted} T_n(z^{-2}(f-\lam)) = \begin{pmatrix}
f_{2} & f_{1} & f_{0}-\lam & f_{-1} & f_{-2} & \ldots \\
f_{3} & f_{2} & f_{1} & f_{0}-\lam & f_{-1}  & \ldots \\
f_{4} & f_{3} & f_{2} & f_{1} & f_{0}-\lam & \ldots \\
f_{5} & f_{4} & f_{3} & f_{2} & f_{1} & \ldots \\
f_{6} & f_{5} & f_{4} & f_{3} & f_{2} & \ldots \\
\vdots & \vdots & \vdots & \vdots & \vdots & \ddots
\end{pmatrix}_{n\times n}.\end{equation}

We will show that for each $k$ the sequence $\{\mu_{k,n}\}_n$ has a limit.
Moreover, the limiting measure will have point masses at $\lam=\lam_1$ (if
$\lam_1\neq\infty$) and $\lam=\lam_2$ (if $\lam_2\neq\infty$) with weights at
least $m_{1,k}$ and $m_{2,k}$, respectively. On the other hand, if
$\lam_1=\infty$ or $\lam_2=\infty$ then the total mass of the limiting measure
is reduced with at least $m_{1,k}$ or $m_{2,k}$ respectively. These facts can
already be seen at the level of the finite-$n$ measures $\mu_{k,n}$ as the
following proposition shows.

\begin{proposition}\label{prop:combinatorial2}  Let $k\in\{-q+1,\ldots,p-1\}$.
Then the polynomial $P_{k,n}(\lam)$ satisfies the following properties.
\begin{itemize}
\item[(a)] 
$\left\{\begin{array}{ll} \textrm{$P_{k,n}(\lam)$ is divisible by
$(\lam-\lam_1)^{m_{1,k}n-c}$,} & \textrm{if $\lam_1\in\cee$},\\
\textrm{$P_{k,n}(\lam)$ has degree at most $(1-m_{1,k})n+c$,} & \textrm{if
$\lam_1=\infty$,}
\end{array}\right.$
\item[(b)] 
$\left\{\begin{array}{ll} \textrm{$P_{k,n}(\lam)$ is divisible by
$(\lam-\lam_2)^{m_{2,k}n-c}$,} & \textrm{if $\lam_2\in\cee$},\\
\textrm{$P_{k,n}(\lam)$ has degree at most $(1-m_{2,k})n+c$,} & \textrm{if
$\lam_2=\infty$,}
\end{array}\right.$
\end{itemize}
where $c\geq 0$ is a constant depending only on the symbol $f$. 

Denote by $Q_{k,n}(\lam)$ the quotient polynomial obtained from $P_{k,n}(\lam)$
by removing all its factors $(\lam-\lam_1)$ (if $\lam_1\in\cee$) and
$(\lam-\lam_2)$ (if $\lam_2\in\cee$). Then we have that
\begin{itemize}\item[(c)] \textrm{$Q_{k,n}(\lam)$ has degree at most
$m_{k}n+2c$.}
\end{itemize}
\end{proposition}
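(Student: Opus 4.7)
I would reduce the analysis of $P_{k,n}(\lambda)=\det T_n(z^{-k}(f-\lambda))$ to a combinatorial analysis of the determinant of a banded pencil matrix. The starting point is the linear recurrence
\[
\sum_{\ell}(B_1 B_2)_\ell\, f_{m-\ell}=A_m
\]
satisfied by the Fourier coefficients of the rational symbol $f=A/(B_1 B_2)$. By column operations using this recurrence, all but $O(1)$ columns of $T_n(z^{-k}(f-\lambda))$ can be replaced by sparse vectors supported on $O(1)$ consecutive rows. After assembling these into a banded structure, one obtains
\[
P_{k,n}(\lambda)=\kappa(\lambda)\cdot\det M_{k,n}(\lambda),
\]
where $\kappa(\lambda)$ has degree and vanishing order at $\lambda_1,\lambda_2$ bounded by a constant depending only on $f$, and $M_{k,n}(\lambda)$ is an $N\times N$ matrix (with $N=n+O(1)$) whose nonzero entries in band $j\in\{-q,\ldots,p\}$ are of the form $a_j-\lambda b_j$, up to an $O(1)$ boundary correction. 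This is essentially Day's identity \cite{Day1,Day2} adapted to the shifted symbol.

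Having reduced to the banded pencil, parts (a), (b) follow from a Leibniz expansion of $\det M_{k,n}(\lambda)$. Each permutation $\sigma$ contributes $\prod_{j=-q}^p(a_j-\lambda b_j)^{r_j(\sigma)}$, where $r_j(\sigma)=\#\{i\mid (i,\sigma(i))\text{ on band }j\}$. By the definition of $\lambda_1,k_1$, the factors $a_j-\lambda_1 b_j$ vanish exactly for $j\in L_1:=\{-q,\ldots,-q+k_1-1\}$, so the order of vanishing of $\det M_{k,n}$ at $\lambda_1$ (for $\lambda_1\in\cee$) is at least $\min_\sigma\sum_{j\in L_1}r_j(\sigma)$. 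If $\lambda_1=\infty$, then $b_j=0$ on $L_1$ and the entries in those bands are constants in $\lambda$, making the degree in $\lambda$ of each term $N-\sum_{j\in L_1}r_j(\sigma)$. A combinatorial analysis of the band profiles achievable by permutations, using the constraints $\sum_j r_j=N$ and $\sum_j j\, r_j=\text{const}$ arising from $\sigma$ being a permutation together with the boundary structure of $M_{k,n}$, yields $\min_\sigma\sum_{j\in L_1}r_j(\sigma)\geq m_{1,k}n-c$.

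Part (b) is symmetric, replacing $\lambda_1,k_1,L_1$ by $\lambda_2,k_2,L_2:=\{p-k_2+1,\ldots,p\}$. For part (c): if $\lambda_1,\lambda_2\in\cee$, combining (a), (b) with the trivial $\deg P_{k,n}\leq n$ gives
\[
\deg Q_{k,n}\leq n-(m_{1,k}n-c)-(m_{2,k}n-c)=m_kn+2c;
\]
if one or both of $\lambda_i=\infty$, the improved degree bound for $P_{k,n}$ from the corresponding part of (a), (b) replaces the trivial bound $n$, combined (when both are infinite) with a joint analysis of the exponents on $L_1\cup L_2$ to reach the same conclusion.

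The main obstacle is the combinatorial step: extracting the precise bound $m_{1,k}n-c$ from the band-profile minimization. A naive LP relaxation of $(r_j/N)$ subject only to the sum and mean constraints does not in general match $m_{1,k}=\max(1-(q+k)/k_1,0)$; the correct bound emerges only after accounting for the full structure of permutations compatible with the banded matrix $M_{k,n}$, including the boundary corrections from Day's reduction. Keeping the $O(1)$ errors uniform in $k$, so as to produce a single constant $c$ depending only on the symbol $f$, is the essential technical difficulty.
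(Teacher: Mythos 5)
Your high-level strategy -- reduce $T_n(z^{-k}(f-\lambda))$ to a banded pencil by invertible triangular left/right multiplication plus an $O(1)$ corner correction (this is exactly the paper's factorization \eqref{wiener}), then expand the determinant over permutations and count how many factors land in the top $k_1$ bands -- is the same route the paper takes. So in outline you are on target.

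However, you leave the decisive combinatorial step unresolved and, more importantly, misdiagnose it. You write that a naive LP relaxation of the band profile $(r_j)$ subject only to $\sum_j r_j = n$ and $\sum_j j\,r_j = \mathrm{const}$ ``does not in general match $m_{1,k}$.'' This is false, and in fact that LP relaxation \emph{is} the argument. For a permutation one has $\sum_j (j-\pi(j)) = 0$, so the band-profile satisfies $\sum_d r_d = n$ and $\sum_d d\,r_d = 0$ where $d = j-\pi(j) \in \{-q-k,\ldots,p-k\}$ (up to $O(1)$ boundary corrections coming from the fixed corner block). Minimizing $\sum_{d \le -q-k+k_1-1} r_d$ over nonnegative profiles with these two linear constraints and with support in $\{-q-k,\ldots,p-k\}$ is a two-variable LP whose minimum is attained at the vertex $r_{-q-k} = x$, $r_{-q-k+k_1} = n-x$, giving
\[
(-q-k)\,x + (-q-k+k_1)(n-x) = 0 \quad\Longrightarrow\quad x = \left(1 - \tfrac{q+k}{k_1}\right) n = m_{1,k}\,n .
\]
Since the LP enlarges the feasible set, its minimum is a valid lower bound for $\min_\pi |N_\pi|$, which is exactly the $m_{1,k}\,n - c$ you need. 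The paper's inequalities \eqref{combproof4}--\eqref{combproof5}, using $\sum_j(j-\pi(j))_+ = \sum_j(\pi(j)-j)_+$ together with the two band bounds $j-\pi(j)\ge -q-k$ on $N_\pi$ and $j-\pi(j)\ge -q-k+k_1$ off $N_\pi$, are precisely a hand-carried version of this LP duality. Likewise, the $O(1)$ constant $c$ is immediate: the corner block $C$ has size and entries independent of $n$ and of $k$, so all boundary defects in the two sums are bounded by a quantity depending only on $|C|$, $p$, $q$. So the ``essential technical difficulty'' you point to is not actually an obstruction; the proof closes exactly along the lines you sketched, and you should simply carry out the LP/rearrangement estimate rather than defer it.
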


Proposition~\ref{prop:combinatorial2} will be proved in
Section~\ref{subsection:proofpropcomb}.

Since the measure \eqref{def:measuresk} is absolutely continuous, the best one
can hope for is $\mu_k$ being the \emph{absolutely continuous part} of the
limiting $k$th generalized eigenvalues distribution. This means that the
possible point masses at $\lam_1$ and $\lam_2$ should be stripped out in
$\mu_k$. This turns out to be indeed the case.

\begin{theorem}\label{theorem:geneigrat} Let $k\in\{-q+1,\ldots,p-1\}$. Then
$$ \liminf_{n\to\infty} \mathrm{sp}_k T_n(f ) = \limsup_{n\to\infty} \mathrm{sp}_k
T_n(f) = \Gamma_k,
$$
and
\begin{equation}\label{weakconvergence}
\lim_{n\to\infty} \int_{\cee} \phi(\lam)\ d\mu_{k,n}(\lam)= \int_{\cee}
\phi(\lam)\ d\mu_{k}(\lam) + \chi_{\lam_1\neq\infty}m_{1,k}\phi(\lam_1) +
\chi_{\lam_2\neq\infty}m_{2,k}\phi(\lam_2)
\end{equation}
holds for every bounded continuous function $\phi$ on $\cee$.
\end{theorem}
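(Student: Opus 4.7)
The plan is to prove both statements by generalizing the argument of \cite{DK} from the banded setting to the rational one, with Day's extension of Widom's determinant identity \cite{Day1,Day2} as the central algebraic tool. First I would establish a Day-type expansion of the form
\begin{equation}\label{plan:Day}
P_{k,n}(\lam) \;=\; \sum_{S} c_{k,S}(\lam)\prod_{j\in S} z_j(\lam)^{\,n+O(1)},
\end{equation}
where $S$ ranges over the $\binom{p+q}{q+k}$ subsets of $\{1,\ldots,p+q\}$ of cardinality $q+k$, and the coefficients $c_{k,S}(\lam)$ are rational functions of $\lam$ and the $z_j(\lam)$, independent of $n$. For $k=0$ and banded symbols this is the Schmidt-Spitzer formula; for $k=0$ and rational symbols it is Day's identity; the extra shift by $z^{-k}$ in the definition of $P_{k,n}$ changes only the cardinality of the active subsets, from $q$ to $q+k$. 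Given \eqref{plan:Day}, the equality $\liminf \mathrm{sp}_k T_n(f)=\limsup \mathrm{sp}_k T_n(f)=\Gamma_k$ follows by the classical Schmidt-Spitzer mechanism: off $\Gamma_k$ the single subset $S^\ast=\{1,\ldots,q+k\}$ dominates in absolute value and prevents vanishing of $P_{k,n}(\lam)$ for $n$ large; on $\Gamma_k$ two subsets compete, and the phase of their ratio executes $O(n)$ rotations along each analytic arc, producing dense sequences of zeros.

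For the weak convergence \eqref{weakconvergence} I would proceed via logarithmic potentials. Invoking Proposition~\ref{prop:combinatorial2}, write
\[
P_{k,n}(\lam) \;=\; (\lam-\lam_1)^{\alpha_{1,n}}(\lam-\lam_2)^{\alpha_{2,n}} Q_{k,n}(\lam),
\]
with $\alpha_{j,n}=m_{j,k}n+O(1)$ when $\lam_j\in\cee$ (and $\alpha_{j,n}=0$ when $\lam_j=\infty$) and $\deg Q_{k,n}\le m_k n+2c$. The factors at $\lam_1,\lam_2$ already produce the point-mass contributions $m_{1,k}\phi(\lam_1)$ and $m_{2,k}\phi(\lam_2)$ in \eqref{weakconvergence}, while in the cases $\lam_j=\infty$ the degree deficit of $P_{k,n}$ accounts for the corresponding loss of total mass. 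It then remains to show that the normalized zero counting measure of $Q_{k,n}$ converges weakly to $\mu_k$. Dividing \eqref{plan:Day} by the extracted point-mass factors and isolating the term indexed by $S^\ast$ yields, uniformly on compact subsets of $\cee\setminus\Gamma_k$,
\[
\lim_{n\to\infty}\frac{1}{n}\log|Q_{k,n}(\lam)| \;=\; \sum_{j=1}^{q+k}\log|z_j(\lam)| \;-\; m_{1,k}\chi_{\lam_1\neq\infty}\log|\lam-\lam_1| \;-\; m_{2,k}\chi_{\lam_2\neq\infty}\log|\lam-\lam_2|.
\]
Up to an additive constant this is $-U^{\mu_k}(\lam)$: computing the jump of $\sum_{j=1}^{q+k}\log z_j(\lam)$ across each analytic arc of $\Gamma_k$ by Sokhotski-Plemelj reproduces exactly the density \eqref{def:measuresk} of $\mu_k$. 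The unicity theorem for logarithmic potentials (see \cite{SaffTotik,Rans}) together with a tightness argument based on the degree bound $\deg Q_{k,n}\le m_k n+2c$ and the fact that $\lam_1,\lam_2\notin\Gamma_k$ for the relevant $k$ then gives the claimed weak convergence.

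The principal obstacle is the dominance argument underpinning the potential limit above. Off $\Gamma_k$ the coefficient $c_{k,S^\ast}(\lam)$ must be shown not to vanish identically on any connected component of the complement, since otherwise a subdominant term in \eqref{plan:Day} would take over locally and the limiting distribution of zeros would change accordingly. In the banded case this non-vanishing is addressed in \cite{DK}; in the present setting the extra factor $B_1(z)B_2(z)$ inflates the algebra of the $c_{k,S}(\lam)$ considerably, and the bookkeeping of which $c_{k,S}$ absorb the expected poles at $\lam_1$ and $\lam_2$ must be matched precisely against Proposition~\ref{prop:combinatorial2} so that the point-mass weights $m_{1,k}$ and $m_{2,k}$ come out correctly and neither too much nor too little mass is extracted from $Q_{k,n}$. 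Once this combinatorial alignment is in hand, the remainder of the argument is essentially potential-theoretic bookkeeping along the lines of \cite{DK}.
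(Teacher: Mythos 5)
Your proposal follows essentially the same skeleton as the paper's own proof: Day's determinant identity is the algebraic engine, Proposition~\ref{prop:combinatorial2} handles the point-mass and degree bookkeeping, the dominance of the subset $S^\ast=\{1,\ldots,q+k\}$ off $\Gamma_k$ gives the potential-theoretic limit, and a tightness argument upgrades weak-star to full weak convergence. The only genuine stylistic difference is that you work with $\tfrac{1}{n}\log|P_{k,n}|$ (logarithmic potentials) where the paper works with $\tfrac{1}{n}P'_{k,n}/P_{k,n}$ (Cauchy transforms, Proposition~\ref{prop:geneigrat}); these are derivatives of one another and interchangeable in this context. That said, there are three places where what you wrote would not close as stated.

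First, your claim that $\lam_1,\lam_2\notin\Gamma_k$ ``for the relevant $k$'' is false. Formula \eqref{builtin:security} only guarantees $\lam_1\notin\Gamma_{-q+k_1}$ and $\lam_2\notin\Gamma_{p-k_2}$, i.e.\ the sinks are not on the contours carrying the external field. But for the $k$ with a positive point mass one has the opposite: $m_{1,k}>0$ holds iff $q+k<k_1$, and precisely then $z_{q+k}(\lam_1)=z_{q+k+1}(\lam_1)=0$, so $\lam_1\in\Gamma_k$; similarly $m_{2,k}>0$ iff $\lam_2\in\Gamma_k$. Thus whenever a point mass occurs, it sits \emph{on} the curve, which is exactly why separating $Q_{k,n}$ from the $(\lam-\lam_j)$ factors is delicate. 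The tightness issue in the paper's proof concerns mass escaping to $\infty$ (and is settled by the degree deficit in Proposition~\ref{prop:combinatorial2} when $\lam_j=\infty$), not the location of $\lam_1,\lam_2$ relative to $\Gamma_k$.

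Second, your factorization $P_{k,n}=(\lam-\lam_1)^{\alpha_{1,n}}(\lam-\lam_2)^{\alpha_{2,n}}Q_{k,n}$ with $\alpha_{j,n}=m_{j,k}n+O(1)$ asserts a two-sided bound on the multiplicity, but Proposition~\ref{prop:combinatorial2} only provides the \emph{lower} bound $\alpha_{j,n}\geq m_{j,k}n-c$ (plus the degree bound on $Q_{k,n}$). If you route the argument through $Q_{k,n}$ you must supply the matching upper bound, for instance by checking that $C_{S^\ast}(\lam)$ in Day's identity does not vanish at $\lam_1$ to order growing with $n$; this is not automatic. The paper sidesteps this entirely by never removing the factors: the Cauchy transform limit in \eqref{cauchytransform:geneig} carries the pole of residue $m_{1,k}$ at $\lam_1$ directly, and the exact multiplicity $\alpha_{1,n}$ is never needed.

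Third, the display you offer for $\lim_n\tfrac{1}{n}\log|Q_{k,n}(\lam)|$ has the wrong signs: using \eqref{Day:2}, one has $\log|w_{S^\ast}(\lam)|=\chi_{\lam_1\neq\infty}\log|\lam-\lam_1|-\sum_{j=1}^{q+k}\log|z_j(\lam)|+\mathrm{const}$, so after dividing by $(\lam-\lam_1)^{m_{1,k}n}(\lam-\lam_2)^{m_{2,k}n}$ the limit should read $\chi_{\lam_1\neq\infty}(1-m_{1,k})\log|\lam-\lam_1|-\chi_{\lam_2\neq\infty}m_{2,k}\log|\lam-\lam_2|-\sum_{j=1}^{q+k}\log|z_j(\lam)|+\mathrm{const}$, which matches \eqref{cauchytransform:muk}; your version has $+\sum\log|z_j|$ and $-m_{1,k}$ in place of $1-m_{1,k}$. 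This is a bookkeeping slip rather than a conceptual error, but as written it does not equal $-U^{\mu_k}$ up to a constant.
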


From \eqref{weakconvergence} we see that $m_{1,k}$ and $m_{2,k}$ are the
weights of the point masses at $\lambda_1$ and $\lambda_2$ in the limiting
$k$th generalized eigenvalues distribution, if present.

\subsection{Organization of the rest of the paper}

The rest of this paper is organized as follows.
Section~\ref{section:proofmaintheorem} contains the proof of
Theorem~\ref{theorem:main}. In Section~\ref{section:prooftheoremgeneig} we
prove Proposition~\ref{prop:combinatorial2} and
Theorem~\ref{theorem:geneigrat}. Most of the proofs are inspired by the proofs
given in \cite{DK} for the corresponding statements in the banded case, hence
we will often refer to that paper. Finally, some illustrations of our results
are given in Section \ref{section:examples}.

\section{Proof of Theorem \ref{theorem:main}} \label{section:proofmaintheorem}

\subsection{Proof of Theorem \ref{theorem:main}(a)--(b)}

In this section we will prove Theorem \ref{theorem:main}(a)--(b). First we
recall some elementary definitions and properties involving the algebraic
equation $A_{\lambda}(z) = 0$.

\begin{definition}(\cite[Section 11.2]{BG},\cite{DK})
A point $\lam\in\cee$ is called a \emph{branch point} if $A_{\lambda}(z) = 0$
has a multiple root. A point $\lam\in\Gamma_k$ is an \emph{exceptional point}
of $\Gamma_k$ if $\lam$ is a branch point, or if there is no open neighborhood
$U$ of $\lambda$ such that $\Gamma_k\cap U$ is an analytic arc starting and
terminating on $\partial U$.
\end{definition}

\begin{proposition}\label{prop:structureGammak}
Let $k\in\{-q+1,\ldots,p-1\}$. Then the set $\Gamma_k$ in \eqref{def:curves} is
the disjoint union of a finite number of open analytic arcs and a finite number
of exceptional points. The set $\Gamma_k$ has no isolated points.
\end{proposition}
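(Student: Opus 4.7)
The plan is to describe $\Gamma_k$ locally via the algebraic function $z(\lambda)$ implicitly defined by $A_\lambda(z)=0$. First I would pin down the exceptional set. Branch points coincide with zeros of the $z$-discriminant $\mathrm{disc}_z A_\lambda$, a polynomial in $\lambda$ that is not identically zero (otherwise $f$ would reduce to a constant). Adjoining the values $\lambda_1,\lambda_2$ where roots escape to $0$ or $\infty$, together with the finitely many $\lambda$ where three or more moduli in \eqref{ordering:roots} coincide, yields a finite exceptional set $E$.

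Next I would analyze $\Gamma_k$ at a non-exceptional point $\lambda_0$. By the implicit function theorem the $p+q$ roots extend to distinct holomorphic branches on a neighborhood $U$ of $\lambda_0$, so in particular $z_{q+k}$ and $z_{q+k+1}$ become holomorphic functions $w_1,w_2$ on $U$ and $w_1/w_2$ is holomorphic and non-vanishing. Hence
\[
h(\lambda) := \log|w_1(\lambda)|-\log|w_2(\lambda)| = \mathrm{Re}\,\log\bigl(w_1(\lambda)/w_2(\lambda)\bigr)
\]
is harmonic on $U$, and $\Gamma_k\cap U$ equals its zero set. A non-constant harmonic function has no isolated zeros, and its zero set is a real-analytic $1$-manifold outside a discrete critical set; this gives both the local analytic-arc structure and the no-isolated-points property on $\Gamma_k\setminus E$. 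Finiteness of the number of arcs would follow from the semi-algebraic nature of $\Gamma_k$.

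The hard part will be excluding the degenerate case $h\equiv 0$ on an open subset of $U$. In that situation $w_1/w_2$ would be a unimodular constant $c$, and analytic continuation would propagate the relation $w_1\equiv c\,w_2$ across the relevant sheet of the Riemann surface of $A_\lambda(z)=0$. Substituting $w_1=c w_2$ into $A_\lambda(w_1)=0=A_\lambda(w_2)$ and invoking the open mapping theorem on $w_2(\lambda)$ would produce the polynomial identity $f(cz)\equiv f(z)$, forcing $c^k=1$ whenever $f_k\neq 0$; the gcd condition \eqref{gcd:condition} then gives $c=1$, contradicting that $w_1$ and $w_2$ are distinct branches. Hence $h\not\equiv 0$ and the local description holds.

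Finally, to rule out isolated points at an exceptional point $\lambda_0\in E$, I would use the convergent Puiseux expansions of the roots $z_j(\lambda)$ near $\lambda_0$. These expansions, combined with the continuity of the $z_j$ at $\lambda_0$ and the no-isolated-points property already established on $\Gamma_k\setminus E$, would show that at least one analytic arc of $\Gamma_k\setminus E$ abuts $\lambda_0$, so $\lambda_0$ is not isolated in $\Gamma_k$. Putting the steps together completes the proof.
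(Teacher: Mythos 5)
Your proof follows essentially the same route as the references the paper cites for this proposition (B\"ottcher--Grudsky Thm.\ 11.9, Schmidt--Spitzer, Duits--Kuijlaars): locally realize $\Gamma_k$ off a finite exceptional set as the zero level set of the harmonic function $\log|z_{q+k}/z_{q+k+1}|$, rule out the degenerate case $z_{q+k}/z_{q+k+1}\equiv c$ via the gcd condition \eqref{gcd:condition}, and handle exceptional points by Puiseux expansions. The paper gives no details of its own, deferring entirely to those sources, and your argument is a correct reconstruction of that line of reasoning (aside from minor unstated points, e.g.\ that $z_{q+k+1}(\lambda)$ is non-constant because $A$ shares no root with $B_1B_2$, and the justification that triple-modulus coincidences form a finite set).
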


The proof of this proposition is similar as in \cite[Theorem
11.9]{BG},\cite{DK},\cite{SchmidtSpitzer}. The condition \eqref{gcd:condition}
is needed to ensure that the $\Gamma_k$ are proper curves, i.e., they are
1-dimensional subsets of $\cee$.

A major role is played by the functions $w_k$ which, for $k=-q+1,\ldots,p-1$
are defined by
\begin{equation}\label{def:wk}
w_k(\lam) = \prod_{j=1}^{q+k} z_j(\lam),\quad \lam\in\cee\setminus\Gamma_k.
\end{equation}
The function $w_k$ is analytic in $\cee\setminus\Gamma_k$. Occasionally we will
also consider $w_k$ for the indices $k=-q$ or $k=p$.

Note that \eqref{def:measuresk} may be written alternatively as
$$ d\mu_k(\lam) = \frac{1}{2\pi i}\left( \frac{w_{k+}'(\lam)}{w_{k+}(\lam)}-\frac{w_{k-}'(\lam)}{w_{k-}(\lam)}
\right)d\lam.
$$
To discuss the integrability of this measure, we will need the asymptotic
behavior of $\frac{w_k'(\lam)}{w_k(\lam)}$. The relevant facts are listed in
the following proposition.

\begin{proposition}\label{prop:threeproperties} Let $k\in\{-q+1,\ldots,p-1\}$ and recall the notations in
Section~\ref{subsection:auxdef}. Then the following statements hold
\begin{enumerate}
\item[(a)] For any $\lam_0\in\overline{\cee}\setminus\{\lam_1,\lam_2\}$,
there exists an $l\in\enn:=\{1,2,3,\ldots\}$ such that
$$
\frac{w_k'(\lam)}{w_k(\lam)} = \left\{\begin{array}{ll}
O((\lam-\lam_0)^{-1+1/l}), & \textrm{ if
}\lam_0\in\cee\\
O(\lam^{-1-1/l}), & \textrm{ if }\lam_0=\infty\end{array}\right.
$$
as $\lam\to\lam_0$ with $\lam\in\cee\setminus\Gamma_k$. We have $l=1$ unless
$\lam_0$ is a branch point.

\item[(b)] Assume $\lam_1\neq\lam_2$. Then near the point $\lam_1$ there exists an $l\in\enn$ such that
$$
\frac{w_k'(\lam)}{w_k(\lam)} = \left\{\begin{array}{ll}
\frac{1-m_{1,k}}{\lam-\lam_1} + O((\lam-\lam_1)^{-1+1/l}), & \textrm{ if
}\lam_1\in\cee\vspace{1mm} \\ -\frac{1-m_{1,k}}{\lam} + O(\lam^{-1-1/l}), &
\textrm{ if }\lam_1=\infty
\end{array}\right.
$$
as $\lam\to\lam_1$ with $\lam\in\cee\setminus\Gamma_k$.

\item[(c)] Assume $\lam_1\neq\lam_2$. Then near the point $\lam_2$ there exists an $l\in\enn$ such that
$$
\frac{w_k'(\lam)}{w_k(\lam)} = \left\{\begin{array}{ll}
\frac{-m_{2,k}}{\lam-\lam_2} + O((\lam-\lam_2)^{-1+1/l}), & \textrm{ if
}\lam_2\in\cee\vspace{1mm} \\ \frac{m_{2,k}}{\lam} + O(\lam^{-1-1/l}), &
\textrm{ if }\lam_2=\infty
\end{array}\right.
$$
as $\lam\to\lam_2$ with $\lam\in\cee\setminus\Gamma_k$.

\item[(d)] If $\lam_1=\lam_2$, then
$$
\frac{w_k'(\lam)}{w_k(\lam)} = \left\{\begin{array}{ll}
\frac{m_{k}}{\lam-\lam_1} + O((\lam-\lam_1)^{-1+1/l}), & \textrm{ if
}\lam_1\in\cee\vspace{1mm} \\ \frac{-m_{k}}{\lam} + O(\lam^{-1-1/l}), &
\textrm{ if }\lam_1=\infty
\end{array}\right.
$$
as $\lam\to\lam_1=\lam_2$ with $\lam\in\cee\setminus\Gamma_k$.
\end{enumerate}
\end{proposition}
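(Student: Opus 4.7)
The plan is to derive each of the four asymptotic formulas from the Puiseux expansions of the algebraic functions $z_j(\lam)$ near the relevant point, then to sum the logarithmic derivatives $z_j'/z_j$ for $j=1,\ldots,q+k$ in order to obtain $w_k'(\lam)/w_k(\lam)$. Throughout I will use the reversed polynomial $\tilde A_\lam(w) = w^{p+q} A_\lam(1/w)$ whenever I need to track roots going to $\infty$, which turns a top-coefficient vanishing into a bottom-coefficient one.

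For part (a), at any $\lam_0 \in \overline{\cee}\setminus\{\lam_1,\lam_2\}$ all roots of $A_\lam(z)=0$ remain uniformly bounded and uniformly bounded away from $0$ on a punctured neighborhood of $\lam_0$: indeed, a root can escape to infinity only where the leading coefficient $a_p-\lam b_p$ vanishes, i.e.\ at $\lam=\lam_2$, and it can reach $0$ only where the constant term $a_{-q}-\lam b_{-q}$ vanishes, i.e.\ at $\lam=\lam_1$. The Newton–Puiseux theorem then gives each branch $z_j$ a local expansion in powers of $(\lam-\lam_0)^{1/l}$ for some $l\in\enn$, with $l=1$ precisely when $\lam_0$ is not a branch point. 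Term-by-term differentiation and division by $z_j$ yields $z_j'/z_j = O((\lam-\lam_0)^{-1+1/l})$, and summing gives (a). The $\lam_0=\infty$ case is handled by the substitution $\mu=1/\lam$, in which the chain-rule factor $d\mu/d\lam=-1/\lam^2$ converts an $O(\mu^{-1+1/l})$ bound into $O(\lam^{-1-1/l})$.

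For parts (b) and (c), the key input is the Newton polygon of $A_\lam(z)$ at $(z,\lam)=(0,\lam_1)$ (respectively of $\tilde A_\lam$ at $(w,\lam)=(0,\lam_2)$). By Definition~\ref{def:lambda12} the coefficients of $z^0,\ldots,z^{k_1-1}$ all vanish at $\lam=\lam_1$, while the coefficient of $z^{k_1}$ does not; moreover the constant term equals $-b_{-q}(\lam-\lam_1)$, so it vanishes to exact order one in $\lam-\lam_1$ (when $\lam_1\in\cee$; the case $\lam_1=\infty$ forces $b_{-q}=0$ and is handled by the $\lam\mapsto 1/\lam$ substitution). The Newton polygon is therefore a single segment from $(0,1)$ to $(k_1,0)$ of slope $-1/k_1$, producing exactly $k_1$ Puiseux branches $\zeta_i(\lam)=C_i(\lam-\lam_1)^{1/k_1}\bigl(1+O((\lam-\lam_1)^{1/k_1})\bigr)$. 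For $\lam$ close enough to $\lam_1$ these are the $k_1$ smallest roots of $A_\lam$ in modulus, so $w_k$ contains exactly $\min(q+k,k_1)$ of them; each contributes $\zeta_i'/\zeta_i=(1/k_1)(\lam-\lam_1)^{-1}+O((\lam-\lam_1)^{-1+1/k_1})$, and the identity $\min(q+k,k_1)/k_1=1-m_{1,k}$ from Definition~\ref{def:mk} yields (b). Part (c) is the mirror image via $z\mapsto 1/z$: the $k_2$ largest roots satisfy $\eta_j(\lam)\sim C_j(\lam-\lam_2)^{-1/k_2}$, each contributing $-1/(k_2(\lam-\lam_2))$ to the log derivative, and $\max(0,k-p+k_2)$ of them lie in $w_k$, which by Definition~\ref{def:mk} equals $m_{2,k}k_2$, yielding the coefficient $-m_{2,k}$. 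In both parts the remaining roots fall under part (a) and contribute only the $O((\lam-\lam_\cdot)^{-1+1/l})$ error term.

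For part (d), when $\lam_1=\lam_2$ the two Newton-polygon analyses at $z=0$ and at $z=\infty$ are simultaneously active and independent, since the $k_1$ small branches and the $k_2$ large branches cluster at disjoint regions of the $z$-plane; the remaining $p+q-k_1-k_2$ roots stay bounded and nonzero by the argument of (a). Adding the two singular contributions gives $\bigl((1-m_{1,k})-m_{2,k}\bigr)/(\lam-\lam_1)=m_k/(\lam-\lam_1)$, as claimed, with the sign flip in the $\lam_1=\lam_2=\infty$ sub-case again coming from $\lam\mapsto 1/\lam$. The main technical obstacle throughout is the bookkeeping: verifying that the roots land in the predicted slots of the ordering \eqref{ordering:roots}, ensuring that the several Puiseux orders can be amalgamated into a single $l\in\enn$ in the error estimate, and handling the finite-vs-infinity subcases uniformly. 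Once the arithmetic identities $1-m_{1,k}=\min(q+k,k_1)/k_1$ and $m_{2,k}=\max(0,k-p+k_2)/k_2$ are in hand, the match between the counting argument and the constants in the proposition is immediate.
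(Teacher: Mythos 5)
Your proposal is correct and follows essentially the same route as the paper: identify the local Puiseux exponents of the roots $z_j(\lam)$ near $\lam_0$, $\lam_1$, $\lam_2$, sum the logarithmic derivatives $z_j'/z_j$ over $j=1,\ldots,q+k$, and match the resulting counts against Definition~\ref{def:mk}. The only difference is expository: you invoke the Newton--Puiseux theorem and Newton polygons directly, whereas the paper derives the same local expansions by hand from the linear-in-$\lam$ decomposition $A_\lam(z)=A_{\lam_0}(z)-(\lam-\lam_0)B_1(z)B_2(z)$ (written there as $(z-z_j)^l P(z)+(\lam-\lam_0)Q(z)$), with the no-common-roots hypothesis guaranteeing $Q(z_j)\neq 0$; your Newton-polygon argument makes the ``single segment of slope $-1/k_1$'' structure more explicit but yields the same exponents and counting identities.
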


\begin{proof} First we make some general observations. For any
$\lam\in\overline{\cee}\setminus\{\lam_1,\lam_2\}$ the polynomial
$A_{\lam}(z)=0$ has roots $z_j(\lam)$, $j=1,\ldots,p+q$, all of which are
finite and non-zero (although some of the roots might occur with higher
multiplicity). Zero roots or infinite roots can only occur if
$\lam\in\{\lam_1,\lam_2\}$. As $\lam\to\lam_1$, then the $k_1$ smallest roots
$z_j(\lam)$ tend to zero like a power $(\lam-\lam_1)^{1/k_1}$, while the other
roots converge to non-zero constants. Similarly, as $\lam\to\lam_2$, then the
$k_2$ largest roots $z_j(\lam)$ tend to infinity like a power
$(\lam-\lam_2)^{-1/k_2}$, while the other roots converge to non-zero constants.
These facts all follow from Definition \ref{def:lambda12}.

To prove the first equality of Part (a), fix
$\lam_0\in\cee\setminus\{\lam_1,\lam_2\}$ and denote with $z_j=z_j(\lam_0)$,
$j=1,\ldots,p+q$, the roots of $A_{\lam_0}(z)$; by the discussion in the
previous paragraph we have $z_j\in\cee\setminus\{0\}$ for each $j$. Pick one of
the roots $z_j$ which has the highest multiplicity $l$. Writing
$$A_{\lam}(z) = (z-z_j)^l P(z)+(\lam-\lam_0)Q(z)$$ where $P(z)$ and $Q(z)$ are
polynomials with $P(z_j)\neq 0$ and $Q(z_j)\neq 0$, it follows that
$$z_j(\lam) = z_j+c(\lam-\lam_0)^{1/l},\qquad \lam\to\lam_0, $$ for some constant $c$.
By taking the logarithmic derivative we obtain
$$
\frac{z_j'(\lam)}{z_j(\lam)} = O((\lam-\lam_0)^{-1+1/l}),\qquad \lam\to\lam_0.
$$
The first equality in Part (a) follows from this and the fact that
$$ \frac{w_k'(\lam)}{w_k(\lam)} =
\sum_{j=1}^{q+k}\frac{z_j'(\lam)}{z_j(\lam)}.
$$

The second equality in Part (a) (for the case $\lam_0=\infty$) is proved in a
similar way, this time using a decomposition
$$A_{\lam}(z) = (z-z_j)^l \lam P(z)+Q(z)$$ where again $P(z_j)\neq 0$ and
$Q(z_j)\neq 0$.

To prove Part (b), first assume that $\lam_1\neq\infty$. From the discussion in
the first paragraph of this proof we obtain
$$
\frac{z_j'(\lam)}{z_j(\lam)} = \frac{1}{k_1(\lam-\lam_1)} +
O((\lam-\lam_1)^{-1+1/k_1}),\quad \lam\to\lam_1,
$$
for all $j=1,\ldots,k_1$, while
$$
\frac{z_j'(\lam)}{z_j(\lam)} = O((\lam-\lam_1)^{-1+1/l}),\quad \lam\to\lam_1,
$$
for $j=k_1+1,\ldots,p+q$ and a suitable $l\in\enn$. Hence
\begin{eqnarray*}
\frac{w_k'(\lam)}{w_k(\lam)} &=& \sum_{j=1}^{q+k}\frac{z_j'(\lam)}{z_j(\lam)} \\
   &=& \min\left(\frac{q+k}{k_1},1\right)\frac{1}{\lam-\lam_1} +
   O((\lam-\lam_1)^{-1+1/\tilde l}) \\
   &=& \frac{1-m_{1,k}}{\lam-\lam_1} + O((\lam-\lam_1)^{-1+1/\tilde l})
\end{eqnarray*}
by virtue of \eqref{index:m1}, with $\tilde l = \max(l,k_1)$. Similarly one can
prove the case where $\lam_1=\infty$. The proofs of Parts (c) and (d) are
similar as well. \end{proof}

\begin{proposition}\label{prop:mupositivemass}
For each $k$ we have that $\mu_k$ in \eqref{def:measuresk} is a positive
measure on $\Gamma_k$ with total mass $\mu_k(\Gamma_k) = m_k$ as defined in
\eqref{index:m}.
\end{proposition}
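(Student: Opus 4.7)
The plan is to adapt the argument from \cite{DK} for the banded case to the rational setting, the new feature being that the singular points $\lam_1, \lam_2$ of Definition~\ref{def:lambda12} may now lie in $\cee$ rather than only at $\infty$. I would begin by observing that $w_k$, though defined through an ordering convention on the roots, is single-valued and analytic on $\cee \setminus (\Gamma_k \cup \{\lam_1,\lam_2\})$. Indeed, crossing a curve $\Gamma_{k'}$ with $k' \neq k$ swaps $z_{q+k'}$ with $z_{q+k'+1}$, but both indices are either inside or outside the range $\{1,\ldots,q+k\}$, so the product $w_k$ is unchanged. Since each $|z_j(\lam)|$ is single-valued and continuous on $\cee$, the modulus $|w_k|$ extends continuously across $\Gamma_k$, so $|w_{k+}| = |w_{k-}|$ there.

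For positivity I would argue locally near a non-exceptional point of $\Gamma_k$. Let $Z(\lam)$ and $W(\lam)$ be the two roots of $A_\lam(z)=0$ whose moduli coincide on $\Gamma_k$, so that $\{z_{q+k},z_{q+k+1}\} = \{Z,W\}$ (in opposite orders on the two sides of $\Gamma_k$). Both $Z$ and $W$ are analytic in $\lam$ locally. Since the other $p+q-2$ roots entering $w_{k+}$ and $w_{k-}$ agree, one obtains $w_{k+}/w_{k-} = W/Z$ after fixing the labeling; in particular $|W/Z| = 1$ on $\Gamma_k$. Consequently
\begin{align*}
d\mu_k \;=\; \frac{1}{2\pi i}\left(\frac{w_{k+}'}{w_{k+}} - \frac{w_{k-}'}{w_{k-}}\right)d\lam \;=\; \frac{1}{2\pi}\,d\psi, \qquad \psi \;=\; \arg(W/Z),
\end{align*}
along $\Gamma_k$. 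But $\Gamma_k$ is locally the zero level set of the harmonic function $\Re\log(W/Z)$, and by the Cauchy--Riemann equations the harmonic conjugate $\Im\log(W/Z) = \psi$ is strictly monotone along such a level curve (outside the isolated zeros of the derivative of $\log(W/Z)$). Choosing the orientation of $\Gamma_k$ so that $\psi$ increases then makes $\mu_k$ a positive measure; since by \eqref{builtin:security} the points $\lam_1, \lam_2$ do not lie on $\Gamma_k$, the analysis from \cite{DK} applies without essential change, and one patches the local monotonicity into a global sign via the orientation-independence of $d\mu_k$ noted after \eqref{def:measuresk}.

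The total mass is obtained by a contour deformation. The function $w_k'/w_k$ is meromorphic on $\overline{\cee}\setminus (\Gamma_k \cup \{\lam_1,\lam_2\})$ with local behavior given by Proposition~\ref{prop:threeproperties}. Deforming a large circle $|\lam|=R$ inward to small loops around $\lam_1, \lam_2$ (when finite) and a thin tube encircling $\Gamma_k$, and using that the tube contribution equals $-2\pi i\,\mu_k(\Gamma_k)$ under our orientation convention, one arrives at
\begin{align*}
\alpha \;=\; \chi_{\lam_1\neq\infty}(1-m_{1,k}) \;-\; \chi_{\lam_2\neq\infty}\, m_{2,k} \;-\; \mu_k(\Gamma_k),
\end{align*}
where $\alpha$ is the exponent in the asymptotic $w_k(\lam) \sim c\,\lam^{\alpha}$ as $\lam \to \infty$, read off from Proposition~\ref{prop:threeproperties} (giving $\alpha = 0$, $-(1-m_{1,k})$, $m_{2,k}$, $-m_k$ or $0$ in the five possible configurations of $\lam_1, \lam_2$). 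A straightforward case check yields $\mu_k(\Gamma_k) = 1 - m_{1,k} - m_{2,k} = m_k$ in every case. The main obstacle is the positivity step, since globally piecing together the local monotonicities requires some care at branch points and self-intersections of $\Gamma_k$; however, Proposition~\ref{prop:structureGammak} together with the standard argument of \cite{DK} handles this without essentially new difficulty.
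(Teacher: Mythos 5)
Your proposal follows essentially the same strategy as the paper: for positivity you invoke (correctly, and with a bit more explicit detail than the paper itself gives) the Cauchy--Riemann argument from \cite{DK}, namely that $d\mu_k = \frac{1}{2\pi}\,d\arg(w_{k+}/w_{k-})$ and this argument is monotone along $\Gamma_k$ since $\Gamma_k$ is a level set of the real part of $\log(w_{k+}/w_{k-})$; and for the total mass you use a contour deformation, reading off residues at $\lam_1$, $\lam_2$ and $\infty$ from Proposition~\ref{prop:threeproperties}. The residue bookkeeping you set up (the identity relating the exponent $\alpha$ of $w_k(\lam)\sim c\lam^\alpha$ to $\chi_{\lam_1\neq\infty}(1-m_{1,k}) - \chi_{\lam_2\neq\infty}m_{2,k} - \mu_k(\Gamma_k)$, combined with a case check) is correct and gives $\mu_k(\Gamma_k)=m_k$ in each configuration, including the degenerate case $\lam_1=\lam_2$.

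One step you pass over, which the paper isolates as a preliminary, is that the contour deformation needs justification precisely when $\lam_1$ or $\lam_2$ happens to lie \emph{on} $\Gamma_k$: then your \lq\lq small loops around $\lam_1,\lam_2$ and a thin tube around $\Gamma_k$\rq\rq\ are not disjoint, and one cannot simply separate the contributions. The paper handles this by first proving local integrability of the density \eqref{def:measuresk} near $\lam_1$, $\lam_2$ and $\infty$ (using Proposition~\ref{prop:threeproperties} and the cancellation of the $1/(\lam-\lam_j)$ terms between the two sides of $\Gamma_k$), which then makes the residue/tube identity valid even in this degenerate position. Adding that integrability lemma would close the only real gap; the rest of the argument matches the paper's proof.
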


\begin{proof} First we prove that the
density \eqref{def:measuresk} is locally integrable around the points $\lam_1$,
$\lam_2$, $\infty$ (at least those of them which lie on the curve $\Gamma_k$).
For $\infty$ this follows from the second equality in Proposition
\ref{prop:threeproperties}(a). For $\lam_1$ this follows from Proposition
\ref{prop:threeproperties}(b) and taking into account that the
$1/(\lam-\lam_1)$ terms at the $+$-side and $-$-side in \eqref{def:measuresk}
cancel; a similar argument holds for the point $\lam_2$.

The fact that the measure $\mu_k$ is positive follows from a Cauchy-Riemann
argument as in \cite[Proof of Proposition 4.1]{DK}.

Finally, the statement that $\mu_k(\Gamma_k) = m_k$ follows from a contour
deformation argument as in \cite[Proof of Proposition 4.1]{DK}. More precisely,
we have
\begin{align}\nonumber \mu_k(\Gamma_k) &:= \frac{1}{2\pi i}\int_{\Gamma_k}\left(
\frac{w_{k+}'(\lam)}{w_{k+}(\lam)}-\frac{w_{k-}'(\lam)}{w_{k-}(\lam)}
\right)d\lam \\
\label{residues1}&= \frac{1}{2\pi i}\int_{\mathcal C}
\frac{w_{k}'(\lam)}{w_{k}(\lam)}\ d\lam
+\chi_{\lam_1\neq\infty}\res\left(\frac{w_{k}'(\lam)}{w_{k}(\lam)},\lam=\lam_1\right)
+\chi_{\lam_2\neq\infty}\res\left(\frac{w_{k}'(\lam)}{w_{k}(\lam)},\lam=\lam_2\right),
\end{align}
where $\mathcal C$ is a clockwise oriented contour surrounding $\Gamma_k$ and
those points $\lam_1$ and $\lam_2$ which are finite, and where $\res(h,\lam)$
denotes the residue of $h$ at $\lam$. Note that \eqref{residues1} is valid even
when some of the $\lam_j$ lie on the curve $\Gamma_k$, $j=1,2$, thanks to the
local integrability of $\mu_k$ around these points. Applying the residue
theorem once again, this time for the exterior domain of $\mathcal C$, we then
find for the first term in \eqref{residues1} that
\begin{equation}\label{residues2} \frac{1}{2\pi i}\int_{\mathcal C}
\frac{w_{k}'(\lam)}{w_{k}(\lam)}\ d\lam =
-\res\left(\frac{w_{k}'(\lam)}{w_{k}(\lam)},\lam=\infty\right).
\end{equation}
The fact that $\mu_k(\Gamma_k) = m_k$ then follows from
\eqref{residues1}--\eqref{residues2} and the residue expressions in
Proposition~\ref{prop:threeproperties}; recall also \eqref{index:m}.
\end{proof}

\begin{proposition}\label{prop:cauchytransformcontour}
For each $k$ we have that \begin{equation}\label{cauchytransform:hulp}
\int\frac{1}{\lam-x}\ d\mu_k(x) =
-\frac{w_k'(\lam)}{w_k(\lam)}+\chi_{\lam_1\neq\infty}\frac{1-m_{1,k}}{\lam-\lam_1}
+\chi_{\lam_2\neq\infty}\frac{-m_{2,k}}{\lam-\lam_2},\quad\textrm{if
}\lam\in\cee\setminus\Gamma_k\end{equation} and
\begin{multline}\label{cauchytransform:muk} \int\log|\lam-x|\
d\mu_k(x) =
-\log|w_k(\lam)|+\chi_{\lam_1\neq\infty}(1-m_{1,k})\log|\lam-\lam_1|\\
-\chi_{\lam_2\neq\infty}m_{2,k}\log|\lam-\lam_2|+\alpha_k,\quad\textrm{if
}\lam\in\cee
\end{multline}
for a suitable constant $\alpha_k$.
\end{proposition}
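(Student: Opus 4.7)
The plan is to derive the Cauchy-transform identity \eqref{cauchytransform:hulp} by a contour-deformation/residue argument in the spirit of Proposition~\ref{prop:mupositivemass}, and then integrate in $\lam$ to obtain the logarithmic-potential identity \eqref{cauchytransform:muk}.

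For \eqref{cauchytransform:hulp}, I would fix $\lam\in\cee\setminus\Gamma_k$ and write the Cauchy transform as the jump integral on $\Gamma_k$:
\begin{equation*}
\int\frac{d\mu_k(x)}{\lam-x}=\frac{1}{2\pi i}\int_{\Gamma_k}\frac{1}{\lam-x}\left(\frac{w_{k+}'(x)}{w_{k+}(x)}-\frac{w_{k-}'(x)}{w_{k-}(x)}\right)dx.
\end{equation*}
The integrand $h(x):=(w_k'(x)/w_k(x))/(\lam-x)$ is meromorphic off $\Gamma_k$ with its only poles at $\lam$ and at the finite ones among $\lam_1,\lam_2$. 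Following the derivation of \eqref{residues1}--\eqref{residues2} and using the local integrability of $\mu_k$ near $\lam_1,\lam_2$ already established in Proposition~\ref{prop:mupositivemass}, a fat-contour argument converts the jump integral into a clockwise contour integral $\oint_{\mathcal C}h\,dx$ around $\Gamma_k$ plus $\chi_{\lam_j\neq\infty}\Res(h,x=\lam_j)$ for $j=1,2$. I would then collapse $\mathcal C$ to infinity through the exterior domain, which contains only $\lam$ and $\infty$; by Proposition~\ref{prop:threeproperties}(a) one has $h=O(1/x^2)$ as $x\to\infty$ (the behaviours in parts (b)--(d), in the cases when $\lam_1$ or $\lam_2$ equals $\infty$, are precisely what is needed here), so the contribution from $\infty$ vanishes. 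The remaining residue at $\lam$ is $-w_k'(\lam)/w_k(\lam)$, and the residues at the finite $\lam_j$ are read off from Proposition~\ref{prop:threeproperties}(b)--(d) as $(1-m_{1,k})/(\lam-\lam_1)$ and $-m_{2,k}/(\lam-\lam_2)$, which together give \eqref{cauchytransform:hulp}.

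For \eqref{cauchytransform:muk} I would integrate \eqref{cauchytransform:hulp} in $\lam$. The right-hand side of \eqref{cauchytransform:hulp} is the $\lam$-derivative of
\begin{equation*}
-\log w_k(\lam)+\chi_{\lam_1\neq\infty}(1-m_{1,k})\log(\lam-\lam_1)-\chi_{\lam_2\neq\infty}m_{2,k}\log(\lam-\lam_2),
\end{equation*}
so integration, followed by taking real parts (which eliminates branch-choice ambiguities), yields \eqref{cauchytransform:muk} on each connected component of $\cee\setminus\Gamma_k$ with a constant $\alpha_k$ depending a priori on the component. To see that $\alpha_k$ can be chosen globally, I would use continuity of both sides on all of $\cee$: the left side is continuous because $\mu_k$ has finite logarithmic energy, and $|w_k(\lam)|$ is continuous across $\Gamma_k$, since the defining condition $|z_{q+k}(\lam)|=|z_{q+k+1}(\lam)|$ on $\Gamma_k$ means that the two boundary values of $w_k$ differ only in phase.

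The main obstacle is the case-by-case bookkeeping according to whether $\lam_1,\lam_2$ are finite, infinite, or coincide: one must verify in each configuration that the decay $h=O(1/x^2)$ at infinity still holds and that the residues combine to produce exactly the coefficients $1-m_{1,k}$ and $-m_{2,k}$ in \eqref{cauchytransform:hulp}. This is the same type of bookkeeping as in the proof of Proposition~\ref{prop:mupositivemass}, and here one simply drags an extra factor $1/(\lam-x)$ through the same computation.
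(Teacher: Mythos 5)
Your proposal is correct and takes essentially the same route as the paper: the jump integral over $\Gamma_k$ is converted by contour deformation into a residue computation using Proposition~\ref{prop:threeproperties}, and then \eqref{cauchytransform:muk} follows by integration, exactly as in the proof of Proposition~\ref{prop:mupositivemass}. The only small addition is your explicit justification that $\alpha_k$ is a single global constant via the continuity of $|w_k(\lam)|$ across $\Gamma_k$, which the paper leaves to the reader by referring to \cite{DK}.
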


\begin{remark} The quantities in the above proposition are only well-defined
if $\lam\neq\lam_j$, $j=1,2$. However, one easily checks that $\lam_1$ and
$\lam_2$ are removable singularities for the right hand sides of both
\eqref{cauchytransform:hulp} and \eqref{cauchytransform:muk}, due to the
continuity of the corresponding left hand sides.
\end{remark}

\begin{proof}[Proof of Proposition~\ref{prop:cauchytransformcontour}]
The proof of \eqref{cauchytransform:hulp} follows by contour deformation in a
similar way as in the proof of Proposition~\ref{prop:mupositivemass}. The
relevant expression is now
\begin{multline*}\int_{\Gamma_k}\frac{1}{\lam-x}\ d\mu_k(x) := \frac{1}{2\pi i}\int_{\Gamma_k}\frac{1}{\lam-x}
\left(\frac{w_{k+}'(x)}{w_{k+}(x)}-\frac{w_{k-}'(x)}{w_{k-}(x)} \right)dx
= \frac{1}{2\pi i}\int_{\mathcal C} \frac{1}{\lam-x}\frac{w_{k}'(x)}{w_{k}(x)}\
dx
\\ -\frac{w_k'(\lam)}{w_k(\lam)} + 
\chi_{\lam_1\neq\infty}\frac{1}{\lam-\lam_1}\res\left(\frac{w_{k}'(x)}{w_{k}(x)},x=\lam_1\right)
+\chi_{\lam_2\neq\infty}\frac{1}{\lam-\lam_2}\res\left(\frac{w_{k}'(x)}{w_{k}(x)},x=\lam_2\right),
\end{multline*}
where $\mathcal C$ is a clockwise oriented contour surrounding $\Gamma_k$, the
point $\lam$, and those points $\lam_1$ and $\lam_2$ which are finite. Now the
integrand in the integral over $\mathcal C$ has zero residue at infinity and
therefore this integral vanishes. Using the residue expressions in
Proposition~\ref{prop:threeproperties} one then arrives at the right hand side
of \eqref{cauchytransform:hulp}.
Finally, the proof of \eqref{cauchytransform:muk} then follows by integrating
\eqref{cauchytransform:hulp}, see also \cite{DK}.
\end{proof}

Now we are ready to prove Theorem \ref{theorem:main}(a)--(b).

\begin{proof}[Proof of Theorem \ref{theorem:main}(a)] Taking into account Proposition \ref{prop:mupositivemass},
it suffices to show that the logarithmic energy $I(\mu_k)$ is bounded for each
$k\in\{-q+1,\ldots,p-1\}$. The latter follows by integrating
\eqref{cauchytransform:muk} over $\mu_k(\lam)$. Then the left hand side becomes
$-I(\mu_k)$, so it suffices to show that each of the four terms in the right
hand side is bounded. For the fourth term this is evident since $\mu_k$ has
finite mass. For the two middle terms this follows from our earlier observation
that $\mu_k$ is integrable around $\lam_1$ and $\lam_2$ (assuming they are on
the curve $\Gamma_k$), which is still true when multiplying with the
logarithmic singularities $\log|\lam-\lam_1|$ and $\log|\lam-\lam_2|$. A
similar argument holds for the first term.
\end{proof}

\begin{proof}[Proof of Theorem \ref{theorem:main}(b)]
The proof of Part (b) follows from \eqref{cauchytransform:muk} and the
auxiliary results
\begin{multline*}-\log|w_{k+1}(\lam)|+2\log|w_k(\lam)|-\log|w_{k-1}(\lam)| =
\log\left|\frac{\prod_{j=1}^{k}z_j(\lam)^2}{\prod_{j=1}^{k+1}z_j(\lam)
\prod_{j=1}^{k-1}z_j(\lam)}\right|= \log\left|\frac{z_k(\lam)}{z_{k+1}(\lam)}\right| = 0
\end{multline*}
for $\lam\in\Gamma_k\setminus\{\lam_1,\lam_2\}$, and
\begin{equation}\label{summk:arithmetic1} -m_{1,k+1}+2m_{1,k}-m_{1,k-1} =
\left\{\begin{array}{ll}
-1/k_1 & k = -q+k_1 \\
0 & k\in\{-q+1,\ldots,p-1\}\setminus\{-q+k_1\}
\end{array},\right.\end{equation}
\begin{equation}\label{summk:arithmetic2} -m_{2,k+1}+2m_{2,k}-m_{2,k-1} =
\left\{\begin{array}{ll}
-1/k_2 & k = p-k_2 \\
0 & k\in\{-q+1,\ldots,p-1\}\setminus\{p-k_2\}
\end{array}.\right.\end{equation}
Here the boundary terms $m_{1,k}$, $m_{2,k}$ for $k=-q$ or $k=p$ are defined by
the usual formulae \eqref{index:m1}--\eqref{index:m2}. These considerations
imply the desired result for $\lam\in\Gamma_k\setminus\{\lam_1,\lam_2\}$; the
cases $\lam=\lam_1$ and $\lam=\lam_2$ then follow by continuity.
\end{proof}

\subsection{Proof of Theorem \ref{theorem:main}(c)}

To prove Theorem \ref{theorem:main}(c) we rewrite \eqref{energyfunctional} in
the following way, compare with \cite[Eq.~(2.12)]{DK}:
\begin{multline}\label{energy:alternativerepr} J(\vec{\nu}) = \left(\sum_{k=-q+1}^{p-2} \frac{m_k m_{k+1}}{2}
I\left(\frac{\nu_k}{m_k} - \frac{\nu_{k+1}}{m_{k+1}}\right)\right)
+\frac{1}{2k_1 m_{k_1}} I(\nu_{-q+k_1})+\frac{1}{2k_2 m_{k_2}} I(\nu_{p-k_2})\\
- \frac{\chi_{\lam_1\neq\infty}}{k_1}\int \log\frac{1}{|x-\lam_1|}\
d\nu_{-q+k_1}(x) - \frac{\chi_{\lam_2\neq\infty}}{k_2}\int
\log\frac{1}{|x-\lam_2|}\ d\nu_{p-k_2}(x).
\end{multline}
We leave it to the reader to check the correctness of this identity; note that
the calculation makes use of the auxiliary result
\begin{equation*} -m_{k+1}+2m_{k}-m_{k-1} =
\left\{\begin{array}{ll}
1/k_1 & k = -q+k_1 \neq p-k_2 \\
1/k_2 & k = p-k_2 \neq -q+k_1\\
1/k_1+1/k_2 & k = -q+k_1 = p-k_2 \\
0 & k\in\{-q+1,\ldots,p-1\}\setminus\{-q+k_1,p-k_2\}
\end{array}\right.
\end{equation*}
for $k\in\{-q+1,\ldots,p-1\}$, which follows from \eqref{index:m} and
\eqref{summk:arithmetic1}--\eqref{summk:arithmetic2}. Here we recall the
boundary values $m_{-q}=m_p=0$.

We also invoke the fact that \begin{equation}\label{Simeonov:estimate}
I(\nu_1-\nu_2)\geq 0,
\end{equation}
whenever $\nu_1$ and $\nu_2$ are positive measures with
$\nu_1(\cee)=\nu_2(\cee)\leq \infty$. This is a well-known result if $\nu_1$
and $\nu_2$ have bounded support \cite{SaffTotik}. If the support is unbounded
this is a recent result of Simeonov \cite{Simeonov}.

\begin{lemma}\label{lemma:boundedbelow}
The energy functional \eqref{energyfunctional} is bounded from below on the set
of admissible vectors of measures $\vec{\nu}$.
\end{lemma}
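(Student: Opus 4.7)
The plan is to exploit the alternative representation \eqref{energy:alternativerepr}. The double sum $\sum_{k=-q+1}^{p-2} \tfrac{m_k m_{k+1}}{2}\, I(\nu_k/m_k - \nu_{k+1}/m_{k+1})$ is non-negative because $\nu_k/m_k$ and $\nu_{k+1}/m_{k+1}$ are probability measures, so Simeonov's inequality \eqref{Simeonov:estimate} applies termwise. Consequently, it suffices to bound from below, for each $j\in\{1,2\}$, the combined expression
$$E_j := \tfrac{1}{2k_j m_{k_j}}\, I(\nu_{\star_j}) - \tfrac{\chi_{\lam_j\neq\infty}}{k_j}\int \log\tfrac{1}{|x-\lam_j|}\, d\nu_{\star_j}(x), \qquad \star_1:=-q+k_1,\ \star_2:=p-k_2.$$

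When $\lam_j\in\cee$, the key input is the built-in security \eqref{builtin:security}: $\lam_j\notin\Gamma_{\star_j}$. Since $\Gamma_{\star_j}$ is closed in $\cee$, I fix $r>0$ small enough that $\overline{D(\lam_j,r)}\cap\Gamma_{\star_j}=\emptyset$, and let $\sigma_j$ be the uniform probability measure on $\partial D(\lam_j,r)$. The mean value property applied to $y\mapsto \log\tfrac{1}{|x-y|}$, which is harmonic on $\overline{D(\lam_j,r)}$ whenever $x$ lies outside this disk, gives the identity $U^{\sigma_j}(x)=\log\tfrac{1}{|x-\lam_j|}$ for every $x\in\Gamma_{\star_j}$, and a direct computation on the circle yields $I(\sigma_j)=\log(1/r)$. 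Expanding Simeonov's inequality $I(\nu_{\star_j}-m_{\star_j}\sigma_j)\ge 0$ and rearranging,
$$I(\nu_{\star_j})\ \ge\ 2m_{\star_j}\int \log\tfrac{1}{|x-\lam_j|}\, d\nu_{\star_j}(x)\ -\ m_{\star_j}^2\log(1/r),$$
which, after dividing by $2k_j m_{k_j}$ and combining with the external field contribution, yields a uniform lower bound on $E_j$ independent of $\nu_{\star_j}$.

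When $\lam_j=\infty$, the external field term in $E_j$ vanishes and only the self-energy $\tfrac{1}{2k_j m_{k_j}} I(\nu_{\star_j})$ survives. A brief asymptotic analysis of the labelled roots $z_i(\lam)$ as $\lam\to\infty$, in the spirit of the first paragraph of the proof of Proposition~\ref{prop:threeproperties}, shows that $|z_{q+\star_j}(\lam)|$ and $|z_{q+\star_j+1}(\lam)|$ stay separated outside a bounded region, so $\Gamma_{\star_j}$ is compact. An analogous Simeonov comparison, now against the uniform probability measure $\sigma$ on a circle $\{|z|=R\}$ enclosing $\Gamma_{\star_j}$ (for which $U^\sigma\equiv \log(1/R)$ on $\Gamma_{\star_j}$ and $I(\sigma)=\log(1/R)$), produces $I(\nu_{\star_j})\ge -m_{\star_j}^2\log R$.

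Putting the non-negative quadratic piece together with the two uniform lower bounds on $E_1$ and $E_2$ yields $J(\vec\nu)\ge -C$ for a constant $C$ depending only on the symbol $f$. The main technical step I anticipate is the verification that $\Gamma_{\star_j}$ is compact in the case $\lam_j=\infty$; once this is in place, everything else is a direct application of Simeonov's inequality \eqref{Simeonov:estimate} and the mean value property for logarithmic potentials.
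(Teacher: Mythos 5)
Your proof is correct and follows the paper's overall strategy: rewrite $J$ via \eqref{energy:alternativerepr}, kill the quadratic sum with Simeonov's inequality \eqref{Simeonov:estimate}, and reduce the problem to bounding the two residual terms $E_1$ and $E_2$ involving the self-energy and the sink. The difference lies in how you treat the case $\lam_j\in\cee$. The paper invokes the standard potential-theoretic fact that the functional $\tfrac12 I(\mu)-\int\log\tfrac{1}{|x-\lam_j|}\,d\mu(x)$ over probability measures on $\Gamma_{\star_j}$ is minimized by the balayage of $\delta_{\lam_j}$ onto $\Gamma_{\star_j}$, citing Saff--Totik. You instead make this bound explicit by comparing $\nu_{\star_j}$ against $m_{\star_j}$ times the uniform measure $\sigma_j$ on a small circle around $\lam_j$ disjoint from $\Gamma_{\star_j}$: the mean value property gives $U^{\sigma_j}=\log\tfrac{1}{|\cdot-\lam_j|}$ on $\Gamma_{\star_j}$ and $I(\sigma_j)=\log(1/r)$, and expanding $I(\nu_{\star_j}-m_{\star_j}\sigma_j)\ge0$ yields exactly the needed inequality with an explicit constant $-\tfrac{m_{\star_j}}{2k_j}\log(1/r)$. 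This is a completely self-contained, quantitative substitute for the balayage citation, and it makes visible precisely where the critical coefficient $1/2$ is used. Your treatment of the case $\lam_j=\infty$ is fine, though the extra Simeonov comparison is unnecessary once you have established compactness of $\Gamma_{\star_j}$: for any finite measure on a compact set of diameter $d$, $I(\nu)\ge \nu(\cee)^2\log(1/d)$ directly. The point you flag as needing verification — that $\Gamma_{\star_j}$ is bounded when $\lam_j=\infty$ — is the same observation the paper relies on via \eqref{builtin:security}, and your sketch (separation of $|z_{q+\star_j}|$ and $|z_{q+\star_j+1}|$ for large $|\lam|$, following from the root behavior near $\lam_2$ in the proof of Proposition~\ref{prop:threeproperties}) is a valid way to justify it.
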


\begin{proof} From \eqref{energy:alternativerepr}--\eqref{Simeonov:estimate} we see that
in order to show that the energy functional $J(\vec{\nu})$ is bounded from
below, it is sufficient to show that
\begin{equation}\label{energy:alternativerepr2}
\frac{1}{2k_1 m_{k_1}} I(\nu_{-q+k_1}) -
\frac{\chi_{\lam_1\neq\infty}}{k_1}\int \log\frac{1}{|x-\lam_1|}\
d\nu_{-q+k_1}(x)
\end{equation}
and
$$
\frac{1}{2k_2 m_{k_2}} I(\nu_{p-k_2}) -
\frac{\chi_{\lam_2\neq\infty}}{k_2}\int \log\frac{1}{|x-\lam_2|}\
d\nu_{p-k_2}(x)
$$
are both bounded from below on the set of admissible vectors of measures
$\vec{\nu}$. Let us check this for the first term
\eqref{energy:alternativerepr2}. We will use that
$$\lam_1\not\in\Gamma_{-q+k_1},$$
a fact already observed in \eqref{builtin:security}, which follows immediately
from the definition of $k_1$. Now we distinguish between two cases. The first
case is when $\lam_1=\infty$. Then the second term in
\eqref{energy:alternativerepr2} drops out while on the other hand
$\infty=\lam_1\not\in\Gamma_{-q+k_1}$, so the contour $\Gamma_{-q+k_1}$ is
bounded and therefore the first term in \eqref{energy:alternativerepr2} is
bounded from below as well.

The second case is when $\lam_1\neq\infty$. Then standard arguments from
potential theory show that the expression \eqref{energy:alternativerepr2} is
minimized precisely when $\nu_{-q+k_1}$ is the \emph{balayage} of the Dirac
point mass at $\lam_1$ onto the curve $\Gamma_{-q+k_1}$, and in particular this
expression is bounded from below \cite[Chapter 2]{SaffTotik}.
\end{proof}

\begin{remark} The above proof goes through because the constant factor in front of the first term
in \eqref{energy:alternativerepr2} is precisely $1/2$. If this constant factor
is different from $1/2$ then the connection with balayage measures breaks down,
and in fact if the constant is larger than $1/2$ then the energy functional is
not bounded from below anymore.
\end{remark}

\begin{proof}[Proof of Theorem \ref{theorem:main}(c)]
Assume that $\vec{\mu}$ is a vector of admissible measures satisfying the
equalities in Theorem \ref{theorem:main}(b), and let $\vec{\nu}$ be any
admissible vector of measures. We need to prove that $J(\vec{\nu})\geq
J(\vec{\mu})$ with equality if and only if $\vec{\nu}=\vec{\mu}$. Note that the
equalities in Theorem \ref{theorem:main}(b) are precisely the Euler-Lagrange
variational conditions of the equilibrium problem. The result then follows from
the fact that the energy functional $J$ is convex and bounded from below. More
precisely, one can use exactly the same argument as in \cite[Proofs of Lemma
2.3 and Theorem 2.3(c)]{DK}, taking into account
\eqref{energy:alternativerepr}--\eqref{Simeonov:estimate}. There are some
modifications induced by the external fields, but this does not lead to
problems since the latter act in a linear way on the measures.
\end{proof}

\section{Proofs of Proposition \ref{prop:combinatorial2} and Theorem \ref{theorem:geneigrat}}
\label{section:prooftheoremgeneig}

\subsection{Proof of Proposition \ref{prop:combinatorial2}}
\label{subsection:proofpropcomb}

The proof of Proposition \ref{prop:combinatorial2} is based on the reduction of
a rationally generated Toeplitz matrix into banded form, which will then allow
us to follow the proof in \cite[Proof of Prop.~2.5]{DK}. Let us recall from
\eqref{algebraiceq:1} that $$ f(z)-\lam  = \frac{A_{\lam}(z)}{B_1(z)B_2(z)},
$$ where the numerator $A_{\lam}(z)$ is a polynomial in $z$.
Then we claim that for any $k\in\{-q+1,\ldots,p-1\}$ and for any $n$
sufficiently large, the rationally generated Toeplitz matrix with symbol
$z^{-k}(f(z)-\lam)$ can be reduced into banded form
by the factorization
\begin{equation}\label{wiener}
L_nT_n\left(z^{-k}(f(z)-\lam)\right)R_n =
T_n\left(z^{-q-k}A_{\lam}(z)\right)+\begin{pmatrix} C & 0 \\ 0 & 0
\end{pmatrix}_{n\times n},
\end{equation}
where
\begin{equation*}
L_n= T_n(B_2(z)),\qquad R_n:=T_n\left(z^{-q}B_1(z)\right),
\end{equation*}
are non-singular lower and upper triangular Toeplitz matrices respectively. The
middle factor in the left hand side of \eqref{wiener} is our rationally
generated Toeplitz matrix of interest, and \eqref{wiener} shows that it can be
reduced to the banded matrix pencil in the right hand side. Here $C$ is a
matrix whose size and entries are independent of $n$ but depend only on the
symbol $f(z)$. For more information on factorizations of the type
\eqref{wiener} see e.g.\ \cite[Prop.~2.12]{BS2} and also \cite{Day1,Hoholdt}.

From \eqref{wiener} we immediately deduce that
\begin{eqnarray}\nonumber
P_{k,n}(\lam) &:=& \det T_n\left(z^{-k}(f(z)-\lam)\right) \\
\label{wiener2} &=&
\frac{1}{\kappa}\det\left(T_n\left(z^{-q-k}A_{\lam}(z)\right)+\begin{pmatrix} C
& 0
\\ 0 & 0 \end{pmatrix}_{n\times n}\right),
\end{eqnarray}
where $\kappa\neq 0$ is a numerical constant, given by the product of the
diagonal entries of the two triangular factors $L_n$ and $R_n$ in
\eqref{wiener}.

We are now ready for the proof of Proposition \ref{prop:combinatorial2}. The
proof will follow by expanding the determinant in \eqref{wiener2} by a basic
combinatorial argument, see also \cite[Proof of Prop.~2.5]{DK}.

\begin{proof}[Proof of Proposition \ref{prop:combinatorial2}(a)] The
proposition is obvious if $m_{1,k}=0$. So we will assume below that
$m_{1,k}>0$, or equivalently \begin{equation}\label{combproof0} q+k<
k_1.\end{equation}

First we consider the case where $\lam_1\neq\infty$. By expanding the
determinant in \eqref{wiener2} we find
\begin{equation}\nonumber
P_{k,n}(\lam) = \frac{1}{\kappa}\sum_{\pi\in
S_n}\prod_{j=1}^{n}\left(a_{j-\pi(j)+k}-\lam b_{j-\pi(j)+k} +\chi_{j\leq
|C|}\chi_{\pi(j)\leq |C|}\ c_{j,\pi(j)}\right).
\end{equation}
Here $S_n$ denotes the set of all permutations of $\{1,\ldots,n\}$, and we
denote with $|C|$ the maximum of the row and column sizes of the matrix $C$;
note that this number is independent of $n$. By the band structure it follows
that we only have non-zero contributions for the permutations $\pi$ that
satisfy
\begin{equation}\label{combproof1} -q-k\leq j-\pi(j) \leq p-k\qquad\textrm{for
all $j=|C|+1,\ldots,n$}.\end{equation} Denote, for $\pi\in S_n$,
\begin{equation}\label{combproof2} N_{\pi} = \{j \mid j-\pi(j)\in\{-q-k,\ldots,-q-k+k_1-1\}\}.
\end{equation}
The set $N_{\pi}$ contains all indices $j$ for which the $(j,\pi(j))$ entry
lies in the union of the $k_1$ topmost bands of the banded matrix in
\eqref{wiener2}. By assumption \eqref{combproof0} these bands include the main
diagonal $j-\pi(j)=0$ and by definition of $k_1$ we have that the entries in
these bands are all divisible by $(\lam-\lam_1)$.

Denote the number of elements of $N_{\pi}$ in \eqref{combproof2} by
$|N_{\pi}|$. Then obviously \begin{equation}\label{combproof3} P_{k,n}\textrm{
is divisible by  }(\lam-\lam_1)^{\min_{\pi\in S_n} |N_{\pi}|},
\end{equation}
where we minimize over all permutations $\pi\in S_n$ satisfying
\eqref{combproof1}.

Let $\pi\in S_n$ satisfy \eqref{combproof1}. We give a lower bound for
$|N_{\pi}|$. Since $\sum_{j=1}^{n} (j-\pi(j))=0$ we obtain
\begin{equation}\label{combproof4}\sum_{j=1}^{n} (j-\pi(j))_+ = \sum_{j=1}^{n} (\pi(j)-j)_+,\end{equation}
where $(\cdot)_+$ is defined as $(a)_+=\max(0,a)$ for $a\in\er$. From the above
definitions we also have that
\begin{equation}\label{combproof4bis}
\left\{\begin{array}{ll}
j-\pi(j)\geq -q-k, & \textrm{ if  } j\in N_{\pi} \\
j-\pi(j)\geq -q-k+k_1, & \textrm{ if  } j\in \{|C|+1,\ldots,n\}\setminus
N_{\pi}.
\end{array}\right.
\end{equation}
By combining \eqref{combproof4} and \eqref{combproof4bis} we find that
$$ (-q-k+k_1)(n-|N_{\pi}|)+\tilde c \leq \sum_{j=1}^{n} (j-\pi(j))_+ = \sum_{j=1}^{n} (\pi(j)-j)_+
\leq (q+k)|N_{\pi}|-\tilde c.
$$
Here $\tilde c\geq 0$ is a correction term which is due to the presence of the
matrix $C$ in the top left matrix corner in \eqref{wiener2}; the number $\tilde
c$ is clearly bounded from above. We then obtain
\begin{equation}\label{combproof5} |N_{\pi}| \geq \frac{k_1-q-k}{k_1}n-c =
\left(1-\frac{q+k}{k_1}\right)n-c = m_{1,k}n-c,
\end{equation}
where we used \eqref{index:m1} and \eqref{combproof0}, and where we put
$c:=2\tilde c/k_1$. The first statement in
Proposition~\ref{prop:combinatorial2}(a) now follows from \eqref{combproof3}
and \eqref{combproof5}.

The proof of the second statement in Proposition~\ref{prop:combinatorial2}(a)
(for $\lam_1=\infty$) is similar to the one above. Now one uses that all the
entries $a_{j-\pi(j)+k}-\lam b_{j-\pi(j)+k}$ in the bands indexed by $j\in
N_{\pi}$ have their $\lam$-coefficient $b_{j-\pi(j)+k}=0$, which then yields in
a similar way to \eqref{combproof3} and \eqref{combproof5} that
\begin{equation}\label{combproof6} \deg P_{k,n}\leq n-\min_{\pi\in S_n} |N_{\pi}|
\leq (1-m_{1,k})n+c,
\end{equation}
as desired.
\end{proof}

\begin{proof}[Proof of Proposition \ref{prop:combinatorial2}(b)] Similar to Part
(a). \end{proof}

\begin{proof}[Proof of Proposition \ref{prop:combinatorial2}(c)] Part (c) follows immediately from
Parts (a) and (b), together with \eqref{index:m}, in case where
$\lam_1\neq\lam_2$. The case where $\lam_1=\lam_2$ can be obtained as well, by
observing that at least one of the numbers $m_{1,k}$ and $m_{2,k}$ must be zero
in that case. The latter follows since otherwise the numerator and denominator
in \eqref{algebraiceq:0} are equal up to multiplication with a scalar, contrary
to our assumptions.
\end{proof}

\subsection{Proof of Theorem \ref{theorem:geneigrat}}

To prove Theorem \ref{theorem:geneigrat} we need to manipulate the polynomial
$P_{k,n}(\lam)$. To this end we will use a determinant identity by K.M. Day
which we state next.

To state the identity, we need some notations. Denote with $\beta_i$ and
$\gamma_i$ the zeros of $B_1(z)$ and $B_2(z)$, respectively. Recall the
notation $z_i = z_i(\lam)$ for the roots of $A_{\lam}(z)$. Thus
\begin{eqnarray}
A_{\lam}(z) &=& c\prod_{i=1}^{p+q} (z-z_i(\lam)) \\
B_1(z) &=& c_1\prod_{i=1}^{q} (z-\beta_i)\\
B_2(z) &=& c_2\prod_{i=1}^{\deg B_2(z)} (z-\gamma_i),
\end{eqnarray}
where $c$, $c_1$, $c_2$ are non-zero constants.

The following theorem was proved under some additional hypotheses by K.M. Day
\cite{Day1}. Other proofs are in \cite{BS1,Hoholdt}, the former of them stated
under the weakest assumptions. We state the theorem in the form that is most
convenient for our purposes.

\begin{theorem} \label{theorem:Day} (Day's determinant identity). Let $k\in\{-q+1,\ldots,p-1\}$ and let
$\lam\in\cee\setminus\{\lam_1,\lam_2\}$ be such
that all roots of $A_{\lam}(z)$ are distinct. Then
\begin{equation}\label{Day:1}  P_{k,n}(\lam) = \det
T_n(z^{-k}(f-\lam)) = \sum_{S} C_{S}(\lam)(w_S(\lam))^n,
\end{equation}
where the sum is over all subsets $S\subset\{1,2,\ldots,p+q\}$ of cardinality
$|S| = q+k$ and for each such $S$ we have
\begin{equation}\label{Day:2}
w_{S}(\lam) :=
(-1)^{q+k}(a_{-q}-b_{-q}\lam)\left(\prod_{j\in S}z_j(\lam)\right)^{-1}
\end{equation}
and (with $\overline{S}:=\{1,2,\ldots,p+q\}\setminus S$)
$$ C_{S}(\lam):= \prod_{j\in \overline{S}}z_j(\lam)^{k}\prod_{\small{\begin{array}{l}i\in S,r\in R,\\
j\in \bar{S}, t\in T\end{array}}}
\frac{(z_j(\lam)-\beta_r)(\gamma_t-z_i(\lam))}{(z_j(\lam)-z_i(\lam))(\gamma_t-\beta_r)},
$$
with $R=\{1,\ldots,q\}$ and $T:=\{1,\ldots,\deg B_2(z)\}$.
\end{theorem}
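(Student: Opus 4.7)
The plan is to reduce Day's identity in the rational case to a statement about banded Toeplitz determinants by means of the factorization already established in \eqref{wiener}--\eqref{wiener2}, and then to invoke the classical determinant identity of Widom for the banded case.

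First, starting from \eqref{wiener2}, I would compute the constant $\kappa$ explicitly. Since $L_n=T_n(B_2(z))$ is lower triangular and $R_n=T_n(z^{-q}B_1(z))$ is upper triangular, their determinants are pure $n$-th powers of the constant diagonal entries, which are the coefficient of $z^0$ in $B_2(z)$ (namely $B_2(0)=c_2\prod_t(-\gamma_t)$) and the coefficient of $z^0$ in $z^{-q}B_1(z)$ (namely the leading coefficient $c_1$ of $B_1$). Thus $\kappa = \bigl(c_1 c_2\prod_t(-\gamma_t)\bigr)^n$, and this constant will supply the denominator factors $\prod_{r,t}(\gamma_t-\beta_r)^{-1}$ that appear in $C_S(\lambda)$ once we pair it against the Vandermonde-type expressions produced in the next step.

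Next, for any $\lambda\in\cee\setminus\{\lambda_1,\lambda_2\}$ at which the $z_j(\lambda)$ are distinct, the polynomial $z^{-q-k}A_\lambda(z)$ is an honest banded Toeplitz symbol of lower/upper bandwidth $(q+k,p-k)$, with $p+q$ distinct non-zero roots $z_1(\lambda),\ldots,z_{p+q}(\lambda)$ and leading coefficient equal to $(-1)^{q+k}(a_{-q}-b_{-q}\lambda)$ divided by $\prod_j z_j(\lambda)$ (using $A_\lambda(0)=a_{-q}-b_{-q}\lambda$). Widom's classical identity for banded Toeplitz determinants then expresses $\det T_n(z^{-q-k}A_\lambda(z))$ as a sum over subsets $S\subset\{1,\ldots,p+q\}$ of cardinality $q+k$, with each summand of the shape $\widetilde C_S(\lambda)\,w_S(\lambda)^n$, where $w_S(\lambda)$ matches exactly the expression in \eqref{Day:2} and $\widetilde C_S(\lambda)$ is a rational function of the roots built from a Cauchy/Vandermonde determinant. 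This is the heart of the identity and produces the factors $(z_j(\lambda)-z_i(\lambda))^{-1}$ for $i\in S$, $j\in\bar S$ and the factors $z_j(\lambda)^k$ for $j\in \bar S$ that appear in $C_S(\lambda)$.

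The remaining step is to incorporate the rank-bounded perturbation $E:=\mathrm{diag}(C,0)$ in \eqref{wiener2} and to combine with $\kappa^{-1}$ to produce the precise formula for $C_S(\lambda)$ in the statement. Because $E$ is supported on a block whose size is independent of $n$, expanding $\det(T_n(z^{-q-k}A_\lambda)+E)$ along those rows and columns (Schur-complement or matrix-determinant-lemma style) yields finitely many correction terms that all retain the same bases $w_S(\lambda)^n$, only modifying the prefactors $\widetilde C_S(\lambda)$. Since $C$ is built deterministically from the triangular Toeplitz factors $L_n$ and $R_n$, whose entries are polynomial expressions in the coefficients of $B_1$ and $B_2$, i.e.\ in the $\beta_r$ and $\gamma_t$, the modification produces precisely the numerator factors $\prod_{j\in\bar S,\,r\in R}(z_j(\lambda)-\beta_r)$ and $\prod_{i\in S,\,t\in T}(\gamma_t-z_i(\lambda))$ that appear in $C_S(\lambda)$. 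Finally, multiplying by $\kappa^{-1}$ introduces the $\prod_{r,t}(\gamma_t-\beta_r)^{-1}$ denominator. The case of arbitrary $\lambda$ (with possibly coincident $z_j(\lambda)$) is not needed here because the statement explicitly excludes it, but it could be recovered by continuity.

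\textbf{Main obstacle.} The hard part is the bookkeeping of the coefficient $C_S(\lambda)$: checking that the cross terms generated by the perturbation $E$, together with the Vandermonde factors coming from Widom's identity and the $\kappa^{-1}$ prefactor, reorganize cleanly into the stated product over $i\in S$, $j\in\bar S$, $r\in R$, $t\in T$. The clean algebraic combinatorial shape is a nontrivial cancellation and is the reason why this identity is attributed to Day (with alternative proofs in \cite{BS1,Hoholdt}).
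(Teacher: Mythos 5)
The paper does not actually prove Theorem \ref{theorem:Day}: it is quoted verbatim from the literature. The only remark the authors make is that for $k\in\{0,\ldots,p-1\}$ the statement is exactly \cite{Day1} (with alternative proofs in \cite{BS1,Hoholdt}), while for $k\in\{-q+1,\ldots,-1\}$ it follows from \cite{Day1} applied to the transposed matrix. So there is no ``paper's proof'' to compare against; you are essentially being asked to re-prove Day's result.

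Your plan --- reduce to a banded Toeplitz pencil via \eqref{wiener}--\eqref{wiener2} and then apply Widom's banded determinant identity --- is in the spirit of the approach in \cite{Hoholdt}, so the overall direction is reasonable. However the sketch leaves out precisely the part that constitutes the theorem. First, the assertion that $\kappa^{-1}$ ``supplies the denominator factors $\prod_{r,t}(\gamma_t-\beta_r)^{-1}$'' in $C_S(\lam)$ cannot be correct as stated: $\kappa=\det L_n\det R_n=(c_1\,B_2(0))^n$ is a perfect $n$-th power, so the factor $\kappa^{-1}$ can only rescale the \emph{bases} $w_S$, i.e.\ contribute something of the form $(c_1B_2(0))^{-n}w_S^n$; it is structurally incapable of producing an $n$-independent prefactor such as $\prod_{r,t}(\gamma_t-\beta_r)^{-1}$. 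In fact this forces you to confront, and you have not, whether the bases coming from Widom applied to $T_n(z^{-q-k}A_\lam)$ are $w_S$ or $\kappa^{1/n}w_S$; getting this normalization right is not automatic and a wrong choice gives a false identity. Second, the statement that ``expanding $\det(T_n(z^{-q-k}A_\lam)+E)$ along the finitely many perturbed rows and columns yields correction terms that reorganize cleanly into the stated product over $i\in S,\ j\in\bar S,\ r\in R,\ t\in T$'' is exactly the nontrivial algebraic identity that Day, B\"ottcher--Silbermann, and H\o holdt--Justesen prove; asserting that it ``works out'' is circular. Without carrying out this Cauchy--Binet/Vandermonde computation, you have not proved anything beyond what is already contained in the banded Widom formula. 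You would need to either reproduce that computation in full or, as the paper does, simply cite \cite{Day1,BS1,Hoholdt} and add the transposition remark for $k<0$.
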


Incidentally, observe that \eqref{Day:2} can be written alternatively as
\begin{equation}\label{Day:3}
w_{S}(\lam) = (-1)^{p-k}(a_p-b_p\lam)\prod_{j\in
\overline{S}}z_j(\lam).\end{equation}

We note that in case where $k\in\{0,\ldots,p-1\}$, our formulation of Theorem
\ref{theorem:Day} follows directly from the one of \cite{Day1}; for the case
$k\in\{-q+1,\ldots,-1\}$ it can be obtained from the result of \cite{Day1} by
working with the transposed matrix.

From \eqref{Day:1}--\eqref{Day:2} we see that for large $n$, the main
contribution in \eqref{Day:1} comes from those subsets $S$ for which
$|w_S(\lam)|$ is the largest possible. For $\lam\in\cee\setminus\Gamma_k$ there
is a unique such $S$, namely
$$ S=S_k:=\{1,2,\ldots,q+k\}.
$$

Now we are ready to show that the asymptotic distribution of the $k$th
generalized eigenvalues of $T_n(f)$ is described by the measure $\mu_k$,
together with possible point masses at $\lam_1$ and $\lam_2$. First we prove
this at the level of the Cauchy transforms.

\begin{proposition}\label{prop:geneigrat}
Let $k\in\{-q+1,\ldots,p-1\}$. Then
\begin{equation}\label{cauchytransform:geneig}
\lim_{n\to\infty} \int_{\cee} \frac{d\mu_{k,n}(x)}{\lam-x}= \int_{\cee}
\frac{d\mu_{k}(x)}{\lam-x} + \chi_{\lam_1\neq\infty}
\frac{m_{1,k}}{\lam-\lam_1} + \chi_{\lam_2\neq\infty}
\frac{m_{2,k}}{\lam-\lam_2}
\end{equation}
uniformly on compact subsets of $\cee\setminus\Gamma_k$.
\end{proposition}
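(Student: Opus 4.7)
\textbf{Proof proposal for Proposition~\ref{prop:geneigrat}.}

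The plan is to compute the Cauchy transform of $\mu_{k,n}$ as the logarithmic derivative
$$ \int_{\cee} \frac{d\mu_{k,n}(x)}{\lam-x} = \frac{1}{n}\frac{P_{k,n}'(\lam)}{P_{k,n}(\lam)}, $$
and extract its asymptotics from Day's identity \eqref{Day:1}. Fix a compact set $K\subset\cee\setminus\Gamma_k$ and, temporarily, remove from $K$ the branch points of $A_\lam$, the (finitely many) zeros of $C_{S_k}(\lam)$, and small disks around $\lam_1,\lam_2$. On the resulting compact set, Day's identity applies and I would write
$$ P_{k,n}(\lam) = C_{S_k}(\lam)\,w_{S_k}(\lam)^n\biggl(1+\sum_{S\neq S_k}\frac{C_S(\lam)}{C_{S_k}(\lam)}\Bigl(\frac{w_S(\lam)}{w_{S_k}(\lam)}\Bigr)^{\!n}\biggr), $$
with $S_k=\{1,\ldots,q+k\}$. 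From \eqref{Day:2}, $|w_S(\lam)|$ is maximized over $S$ by the choice $S=S_k$ because this picks up the reciprocal product of the $q+k$ smallest-modulus roots; away from $\Gamma_k$ one has the strict inequality $|z_{q+k}(\lam)|<|z_{q+k+1}(\lam)|$, and a compactness argument yields a uniform estimate $|w_S(\lam)/w_{S_k}(\lam)|\le 1-\delta$ for all $S\neq S_k$. Hence the parenthesised factor is $1+O((1-\delta)^n)$ uniformly, and its logarithmic derivative contributes $O(n\,(1-\delta)^n)$ on the scale $\frac{1}{n}\frac{P_{k,n}'}{P_{k,n}}$. Combined with $\frac{1}{n}(\log C_{S_k})'\to 0$, this gives
$$ \frac{1}{n}\frac{P_{k,n}'(\lam)}{P_{k,n}(\lam)} \;\longrightarrow\; \frac{w_{S_k}'(\lam)}{w_{S_k}(\lam)} $$
uniformly on the pruned set.

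Now I would identify the limit. Using $w_{S_k}(\lam)=(-1)^{q+k}(a_{-q}-b_{-q}\lam)/w_k(\lam)$ from \eqref{Day:2} together with $a_{-q}-b_{-q}\lam_1=0$, one finds
$$ \frac{w_{S_k}'(\lam)}{w_{S_k}(\lam)} = \chi_{\lam_1\neq\infty}\frac{1}{\lam-\lam_1} - \frac{w_k'(\lam)}{w_k(\lam)}. $$
Substituting the formula \eqref{cauchytransform:hulp} for $-w_k'/w_k$ and using $(1-m_{1,k})-1 = -m_{1,k}$ at the pole $\lam_1$ (and the symmetric identity involving $\lam_2$ supplied by \eqref{Day:3}), the right-hand side collapses exactly to
$$ \int\frac{d\mu_k(x)}{\lam-x} + \chi_{\lam_1\neq\infty}\frac{m_{1,k}}{\lam-\lam_1} + \chi_{\lam_2\neq\infty}\frac{m_{2,k}}{\lam-\lam_2}, $$
which is the desired limit \eqref{cauchytransform:geneig}.

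Finally, I would remove the pruning. The excluded branch points and zeros of $C_{S_k}$ form a finite set, so the convergence extends by Vitali's theorem once we know the pre-limit functions are analytic and locally bounded there. For the points $\lam_1,\lam_2$ themselves, the issue is more delicate: from Proposition~\ref{prop:combinatorial2}(a)--(b), $P_{k,n}$ has a zero at $\lam_j$ of multiplicity at least $m_{j,k}n-c$, contributing a term $(m_{j,k}-O(1/n))/(\lam-\lam_j)$ to the Cauchy transform, which matches the pole on the right-hand side; the analytic remainder is controlled by the same Day estimate applied on a small circle surrounding $\lam_j$ (together with the maximum principle), allowing the convergence to be extended across $\lam_1,\lam_2$ as well.

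The main obstacle is the last step: combining the Day-based uniform estimate (valid only where roots are distinct and $C_{S_k}\neq 0$) with the combinatorial divisibility result to handle the neighbourhoods of $\lam_1,\lam_2$ where the dominant term $C_{S_k}(\lam)w_{S_k}(\lam)^n$ may itself degenerate. Everything else is bookkeeping around Day's identity and Proposition~\ref{prop:cauchytransformcontour}.
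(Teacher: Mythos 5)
Your proposal matches the paper's proof step for step: express the Cauchy transform as $\frac1n P'_{k,n}/P_{k,n}$, extract the dominant term $S=S_k$ from Day's identity, compute its logarithmic derivative, and identify the limit via Proposition~\ref{prop:cauchytransformcontour}; the algebra checks out. The one thing you flag as a ``main obstacle'' is actually benign: on a compact $K\subset\cee\setminus\Gamma_k$ that contains $\lam_j$ one necessarily has $m_{j,k}=0$ (since $m_{j,k}>0$ forces $\lam_j\in\Gamma_k$, as all $k_j$ vanishing roots share the same asymptotic modulus there), so the corresponding pole on the right-hand side vanishes and $\lam_j$ is handled by the same Vitali/normal-family argument as the other finitely many exceptional points --- which is precisely what the remark following the proposition records.
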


\begin{remark}
The above proposition implicitly assumes that $\lam\neq\lam_j$, $j=1,2$.
However one checks that if $\lam_j\in\cee\setminus\Gamma_k$ then $\lam_j$ is a
removable singularity for the right hand side of
\eqref{cauchytransform:geneig}, due to the continuity of the left hand side,
and then the uniform convergence still applies.\end{remark}

\begin{proof}[Proof of Proposition~\ref{prop:geneigrat}]
As already mentioned, for large $n$ the dominant term in Day's
determinant identity Theorem~\ref{theorem:Day} is obtained by taking
$S=S_k:=\{1,2,\ldots,q+k\}$. Then we find in the same way as in \cite[Proof of
Corollary 5.3]{DK} that
\begin{multline}\lim_{n\to\infty} \int_{\cee} \frac{d\mu_{k,n}(x)}{\lam-x}
= \lim_{n\to\infty} \frac{1}{n}\sum_{\lam_i\in\textrm{sp}_k\
T_n(f)}\frac{1}{\lam-\lam_i} =  \lim_{n\to\infty}
\frac{1}{n}\frac{P'_{k,n}(\lam)}{P_{k,n}(\lam)} \\ =
\frac{w_{S_k}'(\lam)}{w_{S_k}(\lam)} \label{Day:4}  =
-\frac{w_k'(\lam)}{w_{k}(\lam)}+\chi_{\lam_1\neq\infty} \frac{1}{\lam-\lam_1}
\end{multline}
uniformly on compact subsets of $\cee\setminus\Gamma_k$, where the last
transition of \eqref{Day:4} follows from \eqref{def:wk} and \eqref{Day:2}. Now from Proposition
\ref{prop:cauchytransformcontour} we see that the right hand side of
\eqref{Day:4} equals the right hand side of \eqref{cauchytransform:geneig}. The
proposition is proved.
\end{proof}

Now we are ready for the

\begin{proof}[Proof of Theorem \ref{theorem:geneigrat}]
From the convergence of the Cauchy transforms in
Proposition~\ref{prop:geneigrat} we deduce that
$$\mu_{k,n}\ \to \ \mu_{k}+m_{1,k}\chi_{\lam_1\neq\infty}\delta_{\lam_1}+m_{2,k}\chi_{\lam_2\neq\infty}\delta_{\lam_2}$$
in the weak-star sense, which means that \eqref{weakconvergence} holds for
every continuous $\phi$ that vanishes at infinity. Now a priori, it is not
immediate that \eqref{weakconvergence} holds for all bounded continuous
functions since it is possible that $\mu_{k,n}$ has mass leaking to infinity as
$n\to\infty$. However, from Proposition~\ref{prop:combinatorial2} it follows
that this cannot happen, i.e., the measures $\{\mu_{k,n}\}_{n}$ are
\emph{tight}. Thus \eqref{weakconvergence} holds indeed for all bounded
continuous functions. For more details see \cite[Proof of Theorem 2.6]{DK}.
\end{proof}

\section{Example}\label{section:examples}

Consider the rationally generated Toeplitz matrix with symbol
\begin{equation}
\label{example:symbol1} f(z) = \frac{1}{2z^2-5z+2} = \frac{1}{(2z-1)(z-2)}
\end{equation}
defined on the complex unit circle. We may compute the Fourier series of this
symbol explicitly and find
$$ f(z) = \ldots
-\frac{1}{12z^3}-\frac{1}{6z^2}-\frac{1}{3z}-\frac{1}{6}-\frac{z}{12}-\frac{z^2}{24}-\frac{z^3}{48}\ldots.
$$
So the rationally generated Toeplitz matrix $T_n(f)$ looks like
$$ T_n(f) =
-\begin{pmatrix}
1/6 & 1/3 & 1/6 & 1/12 & \ldots \\
1/12 & 1/6 & 1/3 & 1/6 & \ldots \\
1/24 & 1/12 & 1/6 & 1/3 & \ldots \\
1/48 & 1/24 & 1/12 & 1/6 & \ldots \\
\vdots & \vdots & \vdots & \vdots & \ddots
\end{pmatrix}_{n\times n}.
$$
Equation \eqref{def:flambda} now becomes $$A_{\lam}(z) = 1-\lam(2z^2-5z+2),$$
and \eqref{def:pq} leads to $p=q=1$. The roots of $A_{\lam}(z)$ are given by
$$ z_{1,2}(\lam) = \frac{1}{4\lam} (5\lam\pm\sqrt{9\lam^2+8\lam}),
$$
and they should be labeled in such a way that $|z_1(\lam)|\leq |z_2(\lam)|$ for
all $\lam$. The roots $z_1(\lam)$ and $z_2(\lam)$ are coalescing precisely when
$9\lam^2+8\lam=0$, so the branch points are $\lam=0$ and $\lam=-8/9$.

Since $p=q=1$, there is only one relevant index $k$ in \eqref{def:curves},
namely $k=0$. The corresponding set $\Gamma_0$ is simply the line segment
connecting the branch points $\lam=0$ and $\lam=-8/9$:
$$\Gamma_0 = \{\lam\in\cee\mid |z_{1}(\lam)| = |z_{2}(\lam)|
\} = [-8/9,0].$$ This may be checked from a straightforward calculation.

Definitions \ref{def:lambda12} and \ref{def:mk} now specialize as follows:
$\lam_1=1/2$, $\lam_2=0$, $k_1=1$, $k_2=2$, and $m_{1,0}=0$, $m_{2,0}=1/2$ and
$m_{0}=1/2$. Thus the limiting eigenvalue distribution of the matrix $T_n(f)$
for $n\to\infty$ consists of an absolutely continuous part $\mu_0$ with total
mass $1/2$, supported on $\Gamma_0 = [-8/9,0]$, and a singular part which is a
point mass of mass $1/2$ at $\lam=0$.

The energy functional \eqref{energyfunctional} now specializes to
\begin{equation} \label{energy:example1} I(\nu_0) - \int
\log\frac{1}{|x-1/2|}\ d\nu_0(x).
\end{equation}
So $\mu_0$ is the minimizer of \eqref{energy:example1} over all measures $\nu_0$ on $\Gamma_0 = [-8/9,0]$
with total mass $1/2$. The
second term in \eqref{energy:example1} can be interpreted as an attraction of
$\mu_0$ towards the point $\lam=1/2$.

The measure $\mu_0$ is absolutely continuous with density given by
\eqref{def:measuresk} (with $k=0$ and $p=q=1$).  The density can be explicitly computed,
but we will omit the result since it does not lead to considerable insight. We only mention
that the density blows up like an inverse square root near both endpoints
$\lam=0$ and $\lam=-8/9$. More precisely, it behaves approximately like
$0.28/\sqrt{|\lam|}$ near $\lam=0$ and like $0.10/\sqrt{\lam+8/9}$ near
$\lam=-8/9$.

Figure~\ref{fig:density1} contains a plot of the limiting density. The figure
 shows that there is more mass near $0$ than near $-8/9$, which is due to
the attraction towards $\lam=1/2$ in \eqref{energy:example1}.

Figure~\ref{fig:density2} shows the result of a numerical computation of the
eigenvalues of $T_n(f)$ with $n=60$. Note that approximately half of the
eigenvalues is located at zero, according to
Proposition~\ref{prop:combinatorial2}; in fact we have $c=0$ in this case.\smallskip

\begin{figure}[htb]
\begin{center}
\includegraphics[scale=0.35,angle=270]{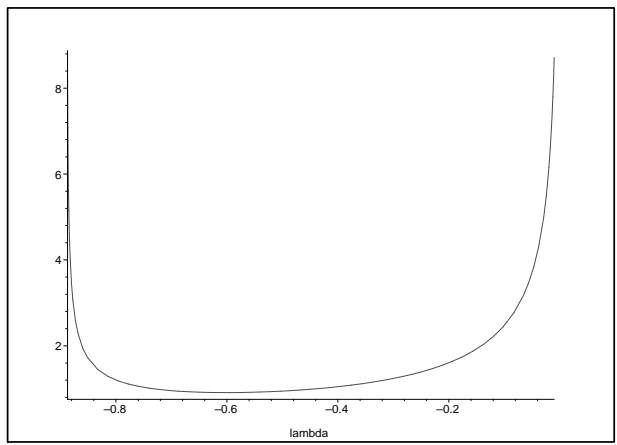}
\end{center}
\caption{Density of the measure $\mu_0$ on $\Gamma_0 = [-8/9,0]$ for the symbol
\eqref{example:symbol1}. The density blows up like a square root near both
endpoints $-8/9$ and $0$. There is more mass near $0$ due to the attraction
towards $\lam=1/2$.} \label{fig:density1}
\end{figure}

\begin{figure}[htb]
\begin{center}
\includegraphics[scale=0.35,angle=270]{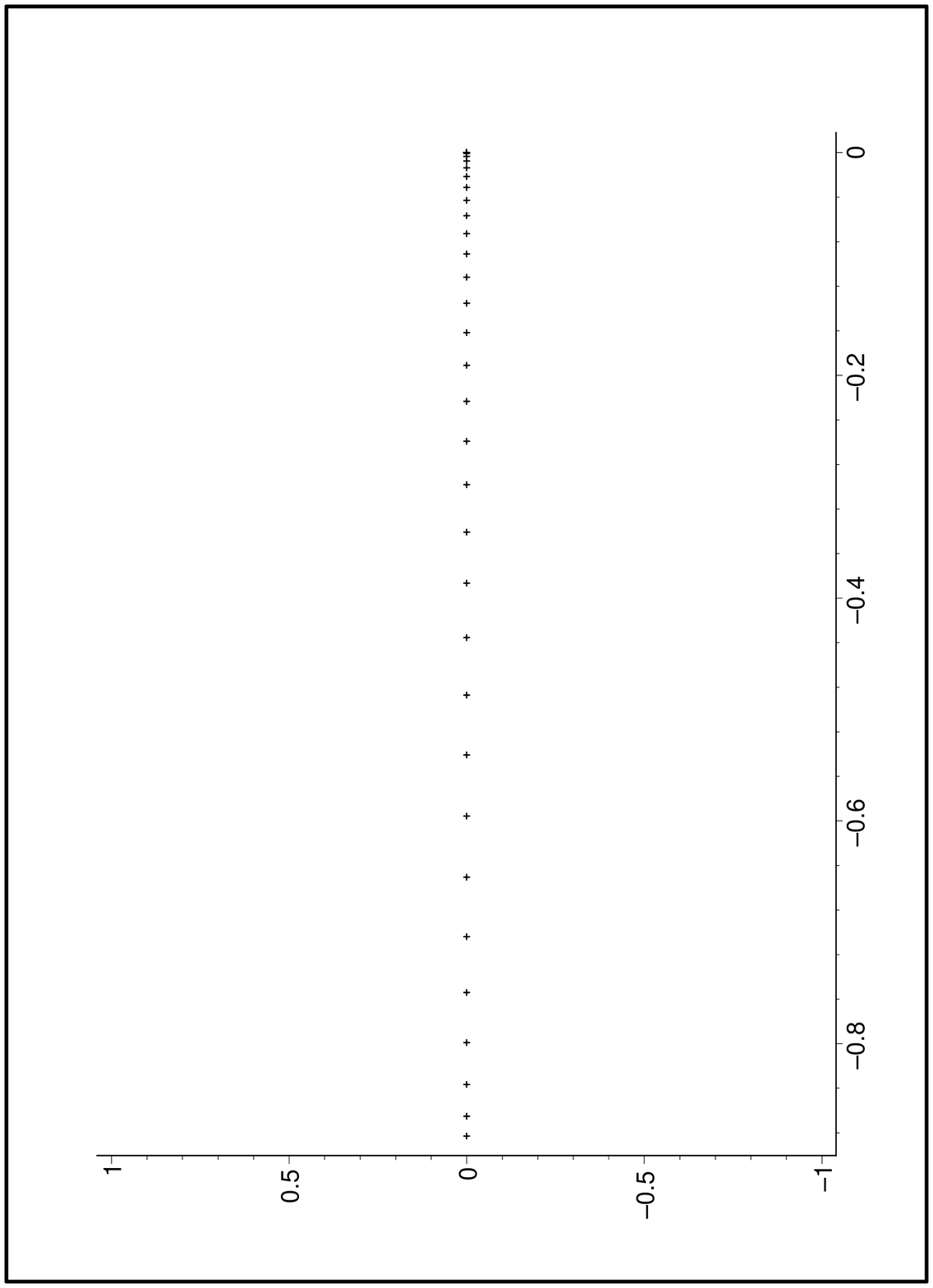}
\end{center}
\caption{Eigenvalues of the matrix $T_n(f)$ for the symbol
\eqref{example:symbol1}, computed numerically in Maple for $n=60$ using high
precision arithmetic. All the eigenvalues are real. There are $30$ of them in
the open interval $(-8/9,0)$, together with a $30$-fold eigenvalue at
$\lam=0$.} \label{fig:density2}
\end{figure}


We may consider the following modification of \eqref{example:symbol1},
\begin{equation}
\label{example:symbol2} f(z) = \frac{1+\epsilon z}{2z^2-5z+2} =
\frac{1+\epsilon z}{(2z-1)(z-2)},
\end{equation}
where $\epsilon>0$ is some small number. It is still true that $\lam_1=1/2$ and $\lam_2=0$ for any $\epsilon$,
but for $\epsilon$ non-zero we now have $k_1=k_2=1$,
$m_{1,0}=m_{2,0}=0$ and $m_{0}=1$. Thus
the limiting eigenvalue distribution of $T_n(f)$ is absolutely continuous
(without point mass), it has total mass $1$, and it is supported on the interval
$\Gamma_0$  
joining the two branch points
\begin{equation}\label{example:birthpointmass1} 
\frac{-4-5\epsilon\pm 2\sqrt{4+10\epsilon+4\epsilon^2}}{9}.\end{equation}

From the above discussions, we see that the limiting eigenvalue distribution of $T_n(f)$ is
absolutely continuous if $\epsilon>0$ and has
a point mass at the origin if $\epsilon=0$.
To understand this, note that for $\epsilon>0$ the energy functional \eqref{energyfunctional}
contains attracting point charges at both
$\lam_1=1/2$ and $\lam_2=0$ (since $k_1=k_2=1$).
In the limit $\epsilon\to 0$, the rightmost endpoint of $\Gamma_0$
in \eqref{example:birthpointmass1} moves towards the point source at $\lam_2=0$.
This causes an increasing accumulation of mass near this endpoint
which in the limit for $\epsilon=0$ gives birth to the point mass.


\section*{Acknowledgment} The authors thank professor Arno Kuijlaars for
stimulating discussions.


\begin{thebibliography}{99}

\bibitem{BaxterSchmidt}
    G. Baxter and P. Schmidt,
    Determinants of a certain class of non-Hermitian Toeplitz matrices,
    Math. Scand. 9 (1961), 122--128.
\bibitem{BG}
    A. B\"ottcher and S.M. Grudsky,
    Spectral Properties of Banded Toeplitz Matrices,
    SIAM, Philadelphia, PA, 2005.
\bibitem{BS1}
    A. B\"ottcher and B. Silbermann,
    Invertibility and Asymptotics of Toeplitz Matrices,
    Akademie-Verlag, Berlin, 1983.
\bibitem{BS2}
    A. B\"ottcher and B. Silbermann,
    Introduction to Large Truncated Toeplitz Matrices,
    Universitext, Springer-Verlag, New York 1998.
\bibitem{Day1}
    K.M. Day,
    Toeplitz matrices generated by the Laurent series expansion of an arbitrary
    rational function,
    Trans. Amer. Math. Soc. 206 (1975), 224--245.
\bibitem{Day2}
    K.M. Day,
    Measures associated with Toeplitz matrices generated by the Laurent expansion of
    rational functions,
    Trans. Amer. Math. Soc. 209 (1975), 175--183.
\bibitem{DK}
    M. Duits and A.B.J. Kuijlaars,
    An equilibrium problem for the limiting eigenvalue distribution of banded Toeplitz matrices,
    SIAM J. Matrix Anal. Appl. 30 (2008), 173--196.
\bibitem{Hirschman}
    I.I. Hirschman, Jr.,
    The spectra of certain Toeplitz matrices,
    Illinois J. Math. 11 (1967), 145--149.
\bibitem{Hoholdt}
    T. H\o holdt and J. Justesen,
    Determinants of a class of Toeplitz matrices,
    Math. Scand. 43 (1978), 250--258.
\bibitem{NS}
    E.M. Nikishin and V.N. Sorokin,
    Rational Approximations and Orthogonality,
    Amer. Math. Soc., Providence, RI, 1991.
\bibitem{Rans}
    T. Ransford,
    Potential Theory in the Complex Plane,
    London Math. Soc. Stud. Texts 28,
    Cambridge University Press, Cambridge, UK, 1995.
\bibitem{SaffTotik}
    E.B. Saff and V. Totik,
    Logarithmic Potentials with External Field,
    Springer-Verlag, Berlin, 1997.
\bibitem{SchmidtSpitzer}
    P. Schmidt and F. Spitzer,
    The Toeplitz matrices of an arbitrary Laurent polynomial,
    Math. Scand. 8 (1960), 15--38.
\bibitem{Simeonov}
    P. Simeonov,
    A weighted energy problem for a class of admissible weights,
    Houston J. Math. 31 (2005), 1245--1260.
\bibitem{Ullman}
    J.L. Ullman,
    A problem of Schmidt and Spitzer,
    Bull. Amer. Math. Soc. 73 (1967), 883--885.
\bibitem{Widom0}
    H. Widom,
    On the eigenvalues of certain Hermitian operators,
    Trans. Amer. Math. Soc. 88 (1958), 491--522.


\end{thebibliography}
\end{document}